\newtheorem{theorem}{Theorem}[section]
\newtheorem{prop}[theorem]{Proposition}
\newtheorem{corol}[theorem]{Corollary}
\theoremstyle{definition}
\newtheorem{defi}[theorem]{Definition}
\newtheorem{rmk}[theorem]{Remark}
\newtheorem{exmp}[theorem]{Example}
\def\dim{{\rm{dim}\,}}
\def\ens#1{\left\{ #1 \right\}}
\def\AA{{\mathcal{A}}}
\def\CC{{\mathcal{C}}}
\def\A{{\mathbb{A}}}
\def\Q{{\mathbb{Q}}}
\def\ens#1{\left\{ #1 \right\}}
\def\Ext{{\rm{Ext}}}
\def\Hom{{\rm{Hom}}}
\def\1{\mathbbm{1}}
\title{A new cluster character with coefficients for cluster category}
\author{Fernando Borges and Tanise Carnieri Pierin}
\address{Universidade Federal do Paran\'a, Curitiba, Brasil
}
\email{fernando.borges@ufpr.br}
\email{tanise@ufpr.br}
\date{\today}
\begin{document}

\begin{abstract}	
	
	We introduce a new cluster character with coefficients for a cluster category $\CC$ and rather than using a Frobenius $2$-Calabi-Yau realization to incorporate coefficients into the representation-theoretic model for a cluster algebra, as done by Fu and Keller, we exploit intrinsic properties of $\CC$. For this purpose, we define an ice quiver associated to each cluster tilting object in $\CC$. In Dynkin case $\A_n$, we also prove that the mutation class of the ice quiver associated to the cluster tilting object given by the direct sum of all projective objects is in bijection with set of ice quivers of cluster tilting objects in $\CC$.
\end{abstract}

\maketitle

\setcounter{tocdepth}{1}
\tableofcontents

\section*{Introduction}

In \cite{CC}, Caldero and Chapoton defined a map $X_?$ from cluster category $\CC_Q$ of a Dynkin quiver $Q$, with $n$ vertices, to $\Q(x_1,\cdots,x_n)$, which permits to obtain explicit formulas to compute cluster variables of the cluster algebra $\mathcal{A}_Q$ without coefficients. Caldero and Keller (\cite{CK06,CK08})  proved an essential property of Caldero-Chapoton map which is the equation \[X_NX_M=X_B+X_{B'}\] where $N$, $M$ are given indecomposable objects in the cluster category $\CC_Q$ such that $\Ext^1_{\CC_Q}(N,M)$ is one dimensional and $B$ and $B'$ are the middle terms of non-split triangles with outer terms $N$ and $M$. This multiplication formula gave rise to the concept of cluster character, as done in \cite{Palu}. In order to introduce the coefficients in the `categorification' of a class of cluster algebras with coefficients, Fu and Keller \cite{FK} replaced cluster categories, which are triangulated $2$-Calabi-Yau, by Frobenius $2$-Calabi-Yau, using the above-mentioned cluster characters. In \cite{Dupont}, Dupont gives a more elementary approach to cluster characters with coefficients.

In this paper, we generalize this latter concept, in order to be able to get a `categorification' of a larger class of cluster algebras with coefficients.    
An analogous result to the multiplication formula for cluster characters, in the sense of Palu, can be obtained and it is one of the fundamental tools to prove our main theorem, which will be stated later; as a consequence of this formula, for each rigid indecomposable object $V$ in $\CC_{\A_n}$, the cluster character with coefficients $X_V$ defined here (Definition \ref{clustercharacter}) can be written in terms of the determinant of a tridiagonal matrix whose principal diagonal consists of the characters of simple modules or the shift of the simple projective module,  and the others are coefficients. More precisely, by indexing the indecomposable objects in $\CC_{\A_n}$ by $(i,j)=\tau^{-i}P_j$, where $P_j$ is the projective object associated to the vertex $j$,  $1\leq j\leq n$ and  $	0\leq i \leq n-j+1$, it is possible to write

\[X_{(i,j)}=\left|\begin{smallmatrix}
                      X_{(i,1)}&y_{i+2}&0&\cdots &0\\
                      z_{i+1}&X_{(i+1,1)}&y_{i+3}&\cdots &0\\
                      0      &z_{i+2}&\ddots&& \vdots\\
                      \vdots &       &       && y_{i+j} \\
                      0      &\cdots &       0&z_{i+j-1}&X_{(i+j-1,1)}
                      \end{smallmatrix}\right| \mbox{ and }
                      z_1\cdots z_{j-1}x_j=\left|\begin{smallmatrix}
                      x_1&y_1&0&\cdots &0\\
                      1&X_{(0,1)}&y_2&\cdots &0\\
                      0      &z_1&\ddots&& \vdots\\
                      \vdots &       &       && y_{j-1} \\
                      0      &\cdots &       0&z_{j-2}&X_{(j-2,1)}                    
\end{smallmatrix}\right|;\] 
$y$'s and $z$'s are coefficients. Observe that these formulas are similar to those ones obtained in \cite{Dupont} for quantized Chebyshev polynomials and in \cite{BRS} for $c$-continuant polynomials. 

In this work we are particularly interested in cluster algebras with coefficients associated to  ice quivers that we call biframed quivers, defined as follows. Given a quiver $Q$ such that its set of vertices is $Q_0=\{1,2,\cdots, n\}$,  a biframed quiver associated to $Q$ is the ice quiver $\widehat{Q}=(\widehat{Q}_0,\widehat{Q}_1)$  of type $(n,3n)$, where $\widehat{Q}_0=Q_0\cup \{n+1,\cdots,3n\}$ and $\widehat{Q}_1$ is the union of $Q_1$ and the set consisting of arrows $2n+i\rightarrow i$ and $i\rightarrow n+i$, for $i=1,\cdots,n$. Notice that cluster algebras with principal coefficients are obtained as a particular case of the above considered by taking half of the coefficients equal to one. 
For a biframed quiver associated to the Dynkin quiver $\A_n$, we prove the following theorem. 

\medskip

\noindent {\bf Theorem A}. The cluster variables of a cluster algebra with  coefficients associated to the biframed quiver $\widehat{\mathbb{A}}_n$ are precisely the characters $X_V$ when $V$ runs over the indecomposable rigid objects in $\mathcal{C}_{\A_n}$.

\medskip
 
 In particular, by combining this result and the previous formulas for cluster characters $X_V$, the cluster variables of the cluster algebra $\mathcal{A}(\textbf{x},\widehat{\A}_n)$ are polynomials over $\Q[\textbf{y},\textbf{z}^{\pm 1}]$ in the characters of the simple modules and the character of the shift of the simple projective, that is, \[\mathcal{A}(\textbf{x},\widehat{\A}_n)=\Q[\textbf{y},\textbf{z}^{\pm 1}][x_1,X_{S_1},\cdots,X_{S_n}].\]  
 
\noindent We point out that for obtaining all cluster variables of the cluster algebra associated to a biframed quiver  $\widehat{\A}_n$ we are using the cluster category $\CC$ associated to the principal part $\A_n$ of $\widehat{\A}_n$. In order to prove Theorem A we propose a definition of an ice quiver $Q_T$ associated to each cluster tilting object $T$ in $\CC$; this definition led us to a bijection between cluster tilting objects of $\CC$ and the mutation class of $\widehat{\mathbb{A}}_n$, as stated in the following theorem.

\medskip 
 
\noindent {\bf Theorem B}. Let $\widehat{\mathbb{A}}_n$ be the biframed quiver of type $\mathbb{A}_n$ and let $\mathcal{C}=\mathcal{C}_{\mathbb{A}_n}$ be the cluster category associated to the principal part of $\widehat{\mathbb{A}}_n$. There exists a bijection between  the cluster tilting objects of $\mathcal{C}$ and the mutation class of $\widehat{\mathbb{A}}_n$ given by $T\mapsto Q_T$.  

\medskip

This correspondence given by Theorem B is compatible with mutations of quiver and cluster tilting object, in the sense that, for a cluster tilting object $T$ in $\CC_{\A_n}$, the following equality holds $\mu_r(Q_T)=Q_{\mu_r(T)}.$ Moreover, by composing the inverse of map $T\mapsto Q_T$ with  cluster character with coefficients map $X_?$ we obtain that any seed is determined by its ice quiver; and also, as usual, any seed is determined by its cluster.  

\medskip

For the others Dynkin types, we obtained the multiplication formula (mentioned before) for almost split triangles, which enable us to prove the following theorem.

\medskip
\noindent {\bf Theorem C}. Let $Q$ be a quiver of Dynkin type.  The characters $X_V$ when $V$ runs over the indecomposable rigid objects in $\mathcal{C}_Q$ are cluster variables of a cluster algebra with  coefficients associated to the biframed quiver $\widehat{Q}$.

\newpage

\section{Preliminaries}
\subsection{Cluster algebra}
\subsubsection{Mutation and seeds}\label{mutationQ}

Let us recall that a \textit{quiver} $Q$ is a quadruple $(Q_0,Q_1,s,t)$,  where $Q_0$ and $Q_1$ are sets whose elements are called \textit{vertices} and \textit{arrows}, respectively, and $s,t$ are maps from $Q_1$ to $Q_0$ which associate to each arrow $\alpha$ its \textit{source} $s(\alpha)$ and its \textit{target} $t(\alpha)$. A finite quiver without loops and $2$-cycles is called a \textit{cluster quiver}. Given such a quiver $Q$,  the \textit{mutation}  of $Q$ at vertex $r$ is a new quiver $\mu_r(Q)$ obtained by following the steps:

\begin{enumerate}
    \item[$(i)$]  for each path $i\rightarrow r\rightarrow j$ create a new arrow $i\rightarrow j$,
    \item[$(ii)$]  reverse all arrows which have $r$ as source or target,
    \item[$(iii)$]  delete all the $2$-cycles created.
\end{enumerate}

Given $1\leq n\leq m$ integers, an \textit{ice quiver} of type $(n,m)$ is a  pair $\widehat{Q}=(Q,F)$ where $Q$ is a cluster quiver with $|Q_0|=m$ and $F$ is a subset of $Q_0$ such that $|F|=m-n$, whose elements  are called \textit{frozen vertices}. The mutation of an ice quiver $\widehat{Q}$ is defined only at non frozen vertices and coincides with quiver mutation in the sense mentioned before. For simplicity, we will always assume that $Q_0=\{1,\cdots,m\}$ and $F=\{n+1,\cdots,m\}.$

A \textit{seed} is a pair $(Q,u)$ where $Q$ is an ice quiver of type $(n,m)$ and $u=\{u_1,\cdots, u_m\}$ is a set consisting of $m$ algebraically independent variables over $\Q$. For a seed $(Q,u)$, the \textit{seed mutation} at $r$ is the new seed $\mu_r(Q,u)=(Q',u')$ with $Q'$  obtained from $Q$ by quiver mutation at vertex $r$  and $u'=u\setminus\{u_r\}\cup\{u'_r\}$ where $u'_r$ is defined by the \textit{exchange relation}

\begin{equation}\label{exrel}
  u_ru_r'=\prod_{v\rightarrow r}u_v +\prod_{r\rightarrow v}u_v.
\end{equation}

\subsubsection{Cluster algebra with coefficients} Let $(Q,u)$ be a seed, which we will call the initial seed.
\begin{itemize}
  \item  A cluster is any set $u'$ appearing in a seed $(Q', u')$ obtained from the initial
seed by a finite sequence of mutation.
\item A cluster variable is any element of a cluster associated to a non frozen vertex and the coefficients are the variables associated to the frozen ones.
\item  The cluster algebra associated to $(Q,u)$, denoted by $\AA(Q,u),$ is the $\Q[x_{n+1},\cdots,x_m]$-subalgebra  of the field of
rational functions $\Q(x_1 , x_2 , \dots, x_m )$ generated by the set of all cluster variables. 
\end{itemize}

\subsubsection{Principal coefficients} Let $Q$ be an ice quiver of type $(n,2n)$ and $B$ be the skew-symmetric matrix correspondent. We say that the cluster algebra $\mathcal{A}(\textbf{x}, B)$ has \textit{principal coefficients} if \[B=\left(\begin{matrix}
  B' & -\mathrm{id}\\ \mathrm{id}& 0 
\end{matrix}\right)\] where $\mathrm{id}$ denotes the $n\times n$ identity matrix.

\subsubsection{Biframed quiver}\label{Biframed} Let $Q$ be a quiver such that its set of vertices is $Q_0=\{1,2,\cdots, n\}$. The biframed quiver associated to $Q$ is the ice quiver $\widehat{Q}=(\widehat{Q}_0,\widehat{Q}_1)$  of type $(n,3n)$, where $\widehat{Q}_0=Q_0\cup \{n+1,\cdots,3n\}$ and $\widehat{Q}_1$ is the union of $Q_1$ and the set consisting of arrows $2n+i\rightarrow i$ and $i\rightarrow n+i$, for $i=1,\cdots,n$.  For the cluster algebra associated to $\widehat{Q}$, we refer the coefficients associated to vertices $n+1,\dots, 2n$ as $y's$ and the remaining ones as $z's$. Observe that  any cluster algebra with principal coefficients is a particular case of a cluster algebra associated to a biframed quiver by specializing half of its coefficients to one.

\subsection{Cluster Category} For a given  algebraically closed field $k$ and an acyclic quiver $Q$, let $\mathcal{D}=\mathcal{D}^b(\mathrm{mod}\   kQ)$ be the bounded derived category of $\mathrm{mod}\ kQ$, $\tau_{\mathcal{D}}$ its Auslander-Reiten translation and $[1]_{\mathcal{D}}$ the shift functor. The \textit{cluster category}  $\CC_Q$ is the orbit category of the functor $\tau^{-1}_{\mathcal{D}}[1]$. It is well known that $\CC_Q$ is a triangulated category \cite{keller}; the shift and the Auslander-Reiten translation in $\CC_Q$ are induced by shift and Auslander-Reiten translation in $\mathcal{D}$. 

\subsubsection{Cluster Tilting Objects and Mutation}\label{mutationT}  
In the above conditions, assuming that  $Q$ has  $n$ vertices we say that an object $T$ in  $\CC_Q$ is  cluster tilting if

\begin{itemize}
	\item[ $(i)$] $\Ext_{\CC_Q}^1(T,T)=0$;
	\item[ $(ii)$] $T = T_1\oplus\cdots\oplus T_n$, where the $T_r$'s are nonisomorphic indecomposable objects.
\end{itemize}
An object $T$ in $\CC_Q$ which satisfies only condition $(i)$ is called {\it rigid}.

\medskip
Let $T = T_1\oplus\cdots\oplus T_n$ be a cluster tilting object in $\CC_Q$. For each $r = 1,\cdots, n$, the mutation of $T$ at $r$ is defined by  $\mu_r(T)=T/T_r\oplus T'_r$, where $T'_r\neq T_r$ is the unique indecomposable object 
in $\CC_Q$ such that  $T/T_r\oplus T'_r$ is a cluster tilting object in $\CC_Q$. By taking the minimal right $\rm{add}(T/T_r)$-approximations of $T'_r$ and $T_r$ we obtain the following exchange triangles \[T_r\rightarrow B\rightarrow T'_r\rightarrow T_r[1]\  \mbox{and}\ T'_r\rightarrow B'\rightarrow T_r\rightarrow T'_r[1].\] In the sequel, unless stated otherwise, we will always use the notation above for the exchange pair $(T_r,T'_r).$ 

According  to \cite{BMRRT} it is possible to associate to $\CC_Q$ a connected graph whose vertices
are the cluster tilting objects, and where there is an edge between two vertices if
the corresponding cluster tilting objects have all but one indecomposable summands in
common.

\section{Cluster character with coefficients}
\subsection{Definition}
\label{clustercharacter} Let $Q$ be an acyclic finite quiver.  
The \textit{cluster character with coefficients} \[X_?: obj\mathcal{C}_Q\rightarrow\Q (x_1,\dots,x_n,y_1,\dots,y_{n},z_1,\dots,z_n)\] is defined as a map which satisfies:

\begin{itemize}

\item For all $M,N\in obj\mathcal{C}_Q$, we have $X_{M\oplus N}=X_MX_N$.
\item  $X_{P_i[1]}=x_i,$ where $P_i[1]$ is the shift of an indecomposable projective  module.
\item If $V$ is an indecomposable   $kQ$-module with dimension vector  $d=\displaystyle\sum_{i\in Q_0}d_i\alpha_i=(d_1,\dots,d_n)$, then its  cluster character with coefficients is given by
\end{itemize}
\[
 X_V= \sum_{0\leq e\leq d}\chi(Gr_e(V))\prod_{r=1}^nx_r^{-\langle e,\alpha_r\rangle -\langle \alpha_r,d-e\rangle}y_r^{e_r}z_{r}^{d_r-e_r}\] 
where  $\chi$ denotes the Euler characteristic and $\langle\ ,\ \rangle$ the Euler form.

For a set of variables $\ens{u_1,\dots,u_n}$ and a dimension vector $d=(d_1,\dots,d_n)$ we sometimes use the notation $u^d$ to represent the product $u_1\cdot u_2\cdots u_n$; in this way, the cluster character becomes 

\[
 X_V= \sum_{0\leq e\leq d} y^ez^{d-e}\chi(Gr_e(V))\prod_{r=1}^nx_r^{-\langle e,\alpha_r\rangle -\langle \alpha_r,d-e\rangle}.\]

\begin{rmk}
\end{rmk}
\begin{itemize}
 \item If $y_r=1,z_r=1$,  $1\leq r\leq n$, then $X_?$ is the Caldero-Chapoton map.
 \item If $z_r=1$, $1\leq r\leq n$, then $X_?$  is the cluster character with coefficients defined by Dupont (2010). In particular, the cluster variables of a cluster algebra with principal coefficients are precisely the characters $X_V$ when $V$ runs over the indecomposable rigid objects in $\mathcal{C}_Q$.
\end{itemize}
  
Taking this remark into account, it becomes natural to ask if there exists a cluster algebra which relates to the above defined cluster character, as happens in the classical case. In the next section we give a partial answer for Dynkin case.

\section{Dynkin case $\mathbb{A}_n$}

The aim of this section is to  establish the connection between cluster character and mutation of a seed associated to the biframed quiver $Q$ of type $\A_n$; using this, it will be possible to prove that the set of  cluster variables of the cluster algebra $\mathcal{A}_Q$ coincides with the set of characters $X_V$ when $V$ runs over the indecomposable rigid objects in the cluster category. 

\subsection{A multiplication formula}\label{sec:multiAn}
For Dynkin case $\A_n$, the exchange triangles mentioned in Subsection \ref{mutationT}  have the  configuration presented in Figure \ref{fig:cmrg}, where $B$ is either $U_1\oplus U_2$ or $U_3\oplus U_4$ depending on whether $T_r= N$ or $T_r = M$. 
 
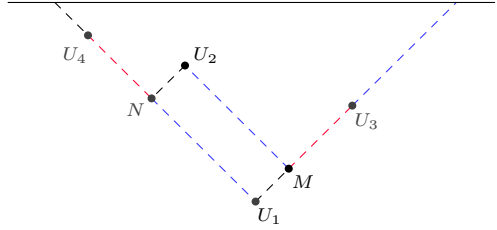
\begin{figure}[htb]
				\begin{center}
\definecolor{ttttff}{rgb}{0.2,0.2,1}
\definecolor{ffqqtt}{rgb}{1,0,0.2}
\definecolor{xdxdff}{rgb}{0.49,0.49,1}
\definecolor{uququq}{rgb}{0.25,0.25,0.25}
\begin{tikzpicture}[line cap=round,line join=round,>=triangle 45,x=0.40cm,y=0.40cm]
\draw [dashed](-1.96,3.54)-- (-0.87,2.45);
\draw [dashed,color=ffqqtt] (-0.87,2.45)-- (1.22,0.36);
\draw [dashed,color=ttttff] (1.22,0.36)-- (4.64,-3.06);
\draw [dashed](4.64,-3.06)-- (5.73,-1.97);
\draw [dashed,color=ttttff] (5.73,-1.97)-- (2.32,1.45);
\draw [dashed](2.32,1.45)-- (1.22,0.36);
\draw [dashed,color=ffqqtt] (5.73,-1.97)-- (7.82,0.12);
\draw [dashed,color=ttttff] (7.82,0.12)-- (11.24,3.54);
\draw (-3.5,3.54)-- (12.5,3.54);
\begin{scriptsize}
\fill [color=uququq] (4.64,-3.06) circle (1.5pt);
\draw[color=black] (5.1,-3.54) node {$U_1$};
\fill [color=black] (5.73,-1.97) circle (1.5pt);
\draw[color=black] (6.2,-2.4) node {$M$};
\fill [color=black] (2.32,1.45) circle (1.5pt);
\draw[color=black] (3.0,1.79) node {$U_2$};
\fill [color=uququq] (1.22,0.36) circle (1.5pt);
\draw[color=uququq] (0.7,0) node {$N$};
\fill [color=uququq] (-0.87,2.45) circle (1.5pt);
\draw[color=uququq] (-1.3,1.8) node {$U_4$};
\fill [color=uququq] (7.82,0.12) circle (1.5pt);
\draw[color=uququq] (8.3,-0.4) node {$U_3$};
\end{scriptsize}
\end{tikzpicture}
\end{center}
\caption{ Exchange triangles for $\A_n$}\label{fig:cmrg}
\end{figure}
\noindent 
In the next theorem it is given a multiplication formula for the considered case. For its proof we consider the same notation as in Figure \ref{fig:cmrg}.

\begin{theorem}\label{teo:multf}
  Let $H^0=\Hom(\oplus_i P_i,\_):\mathcal{C}_{\A_n}\rightarrow\mathrm{mod}k\A_n$ and let $M$ and $N$ be indecomposable objects in $\mathcal{C}_{\A_n}$ such that $\Ext^1_{\mathcal{C}_{\A_n}}(M,N)$ is one-dimensional. 
\begin{itemize}
 \item[$(a)$]
   If $N$ and $M$ are 	 modules such that $\Hom_{k\A_n}(M,N)=0$, then there exist modules $U_3,U_4$ such that $H^0(U')=U_3\oplus U_4$ and  \[X_NX_M=X_{U}+X_{U'}y^{\underline{\dim}M-\underline{\dim}U_3}z^{\underline{\dim}N-\underline{\dim}U_4};\]

\item[$(b)$] If $M=P_i[1]$  and $N$ is an 	indecomposable module, then  \[X_NX_M=X_{U}y^{\underline{\dim}N-\underline{\dim}H^0U}
 +X_{U'}z^{\underline{\dim}N -\underline{\dim}H^0U'},
\]

\end{itemize}  where $U$ and $U'$ are the unique objects such that there exist non split triangles  \[N\rightarrow U\rightarrow M\rightarrow N[1]\  \mbox{and}\ M\rightarrow U'\rightarrow N\rightarrow M[1].\]
\end{theorem}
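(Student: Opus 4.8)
The plan is to reduce the multiplication formula to the known Caldero–Keller multiplication formula for the coefficient-free cluster character (the Caldero–Chapoton map), and then track carefully how the extra monomial factors in $y$ and $z$ arise from the definition of $X_?$. The key observation is that the cluster character with coefficients differs from the Caldero–Chapoton map only by the insertion of the factor $y^e z^{d-e}$ inside the sum over the submodule Grassmannians $\Gr_e(V)$. So the strategy is: first recall that for the coefficient-free character one has $X_N X_M = X_B + X_{B'}$, where $B$ and $B'$ are the middle terms of the two exchange triangles; then compute both sides of the desired identity by expanding the Grassmannian sums and comparing the $y$- and $z$-degrees term by term.

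\medskip

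\noindent First I would set up the bookkeeping. Writing $\underline{\dim}$ for dimension vectors and using that $X_{M\oplus N}=X_M X_N$, I would expand $X_N X_M$ as a single sum over pairs of submodules (one of $N$, one of $M$), and compare with the Caldero–Keller formula applied to the underlying coefficient-free characters. The main point in part $(a)$ is that $\Hom_{k\A_n}(M,N)=0$ forces $U$ to be (the image under $H^0$ of) a genuine module direct sum on one side, while $U'$ contributes the terms $U_3\oplus U_4$; here the hypothesis that $\Ext^1$ is one-dimensional guarantees, via the exchange triangles of Figure \ref{fig:cmrg}, that there are exactly two middle terms, matching the two summands of the Caldero–Keller formula. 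The coefficient monomial $y^{\underline{\dim}M-\underline{\dim}U_3}z^{\underline{\dim}N-\underline{\dim}U_4}$ is then read off by comparing the total $y$-degree and $z$-degree of a submodule of $N\oplus M$ against the $y$- and $z$-degrees contributed by submodules of $U'=H^{0,-1}(U_3\oplus U_4)$. In type $\A_n$ all relevant Euler characteristics of Grassmannians are $0$ or $1$, so no subtle cancellation occurs and the comparison is essentially a matching of monomials.

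\medskip

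\noindent For part $(b)$, where $M=P_i[1]$ is the shift of a projective, I would use that $X_{P_i[1]}=x_i$ by definition, so $X_N X_M = x_i X_N$, and that in the triangles the middle terms $U,U'$ now genuinely involve the projective; the functor $H^0=\Hom(\oplus_i P_i,\_)$ kills the shifted projective summand, which is exactly why the correction terms appear as $y^{\underline{\dim}N-\underline{\dim}H^0 U}$ and $z^{\underline{\dim}N-\underline{\dim}H^0 U'}$ rather than with full dimension vectors. I would verify this by expanding $x_i X_N$ using the definition of $X_N$ and identifying the two resulting families of terms with $X_U$ and $X_{U'}$ after correcting their $x$-exponents via the Euler form; the discrepancy between $\underline{\dim}N$ and $\underline{\dim}H^0 U$ (resp. $H^0 U'$) precisely accounts for the extra coefficient monomials.

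\medskip

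\noindent \textbf{The hard part} will be the careful matching of the $x$-exponents $-\langle e,\alpha_r\rangle-\langle\alpha_r,d-e\rangle$ across the triangle: one must show that the Euler-form exponents computed from submodules of $U$ (resp. $U'$) agree, after the shift in dimension vectors, with those computed from pairs of submodules of $N$ and $M$. This is where the structure of the exchange triangles in Figure \ref{fig:cmrg} and the restriction to type $\A_n$ (where submodule lattices are thin and Grassmannians are points) are essential, so that the comparison reduces to an identity of dimension vectors rather than a genuine cohomological computation. I expect the long-exact-sequence analysis of $\Hom$ and $\Ext^1$ applied to the exchange triangles — to pin down which submodules of the middle term decompose as pairs of submodules of the outer terms — to be the technical heart of the argument.
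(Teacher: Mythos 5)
Your working plan---expand each character as a sum over submodule Grassmannians, use the exchange triangles to set up a correspondence between submodules of $N\oplus M$ and submodules of the middle terms, read the $y,z$-corrections off the resulting dimension-vector shifts, and match the $x$-exponents by Euler-form identities---is in substance the paper's own proof, so the approach is right. But one element of your framing would fail if taken literally: the proposed ``reduction to the known Caldero--Keller formula'' cannot serve as an actual reduction, and the paper never invokes that formula. The coefficient-free identity $X_NX_M=X_B+X_{B'}$ is an equality of Laurent polynomials in the $x$'s alone; since distinct strata $\Gr_e$ can contribute identical $x$-monomials, this identity does not determine its $y,z$-graded refinement. One is forced to rebuild the underlying term-by-term correspondence from scratch, which is exactly what the paper does---so the CK formula buys you nothing, and your proposal stands or falls on the part you defer.

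That deferred part (``the hard part'', ``the technical heart'') is the entire content of the proof, and it is worth recording what it concretely consists of. The re-indexings of the sums come from explicit short exact sequences: in part $(a)$, $0\to F\to M\oplus G\to U_3\to 0$ with $U_2\subsetneq F\subset M$ for each submodule $G\subset U_3$, and $0\to E'\to E\oplus F\to F'\to 0$ to transport the portion of $X_{U_1\oplus U_2}$ indexed by $N\subsetneq E\subset U_1$; in part $(b)$, $0\to E\to N\oplus F\to U_2\to 0$ with $U_4\subset E\subseteq N$. More importantly, the matching of $x$-exponents in part $(a)$ rests on one specific fact your outline never identifies: the Coxeter-transformation identity $\Phi(\underline{m}-u_3)=\underline{n}-u_4$, equivalently $\langle u_3-\underline{m},\alpha_r\rangle=-\langle\alpha_r,u_4-\underline{n}\rangle$, which is what converts the shifted exponents on the $U'$ side into the exponents appearing in $X_{N\oplus M}$; in part $(b)$ the analogous role is played by the computations $\langle\underline{n}-u_2,\alpha_r\rangle$ and $\langle\alpha_r,\underline{n}-u_4\rangle$, producing the factors $x_ix_{i'}^{-1}$ and $x_ix_{i''}^{-1}$. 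Without these ingredients your proposal is an outline of the paper's argument rather than a proof; with them, it is the paper's argument.
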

\begin{proof} 

We first point out that if $M$ is an indecomposable module, then $\chi (Gr_e(M))$ is either $0$ or $1$, depending on whether $M$ has a submodule with dimension vector $e$  (\cite{CC}, example 3.2). By Definition \ref{clustercharacter}, setting $\underline{n} = \dim N$ and $\underline{m} = \dim M$,

\begin{equation}\label{eq:NM}
 X_{N\oplus M}= \sum_{\substack{ 0\leq e\leq \underline{n}\\  0\leq f\leq \underline{m}} }y^{e+f}z^{\underline{n}+\underline{m} -(e+f)}\prod_{r=1}^nx_r^{-\langle e +f,\alpha_r\rangle -\langle \alpha_r,\underline{n}+\underline{m} -(e+f)\rangle}.
 \end{equation}
Since there is an exact sequence $0\rightarrow N\rightarrow U_1\oplus U_2\rightarrow M\rightarrow 0$, it is possible to prove that $u_1+u_2=\underline{n}+\underline{m}$, where $u_1$ and $u_2$ are the dimension vectors of $U_1$ and $U_2$, respectively. In this case
\begin{equation}\label{eq:U}
 X_{U_1\oplus U_2}=  \sum_{\substack{ 0\leq e\leq u_1\\  0\leq f\leq u_2} }y^{e+f}z^{\underline{n}+\underline{m} -(e+f)}\prod_{r=1}^nx_r^{-\langle e +f,\alpha_r\rangle -\langle \alpha_r,\underline{n}+\underline{m} -(e+f)\rangle}.
 \end{equation}
On the other hand,
\[
 X_{U_3\oplus U_4}y^{\underline{m}-u_3}z^{\underline{n}-u_4}= \sum_{\substack{ 0\leq e\leq u_4\\  0\leq g\leq u_3} }y^{e+g+\underline{m}-u_3}z^{\underline{n}+u_3 -(e+g)}\prod_{r=1}^nx_r^{-\langle e +g,\alpha_r\rangle -\langle \alpha_r,u_3+u_4 -(e+g)\rangle}.\]
For each submodule $G$ of $U_3$, with dimension vector denoted by $g$, there exists an exact sequence $0\rightarrow F\rightarrow M\oplus G\rightarrow U_3\rightarrow 0$, where $U_2\subsetneq F\subset M$. As a consequence, letting $u_3$ and $f$ the dimension vectors of $U_3$ and $F$, respectively, we obtain that $u_3+f=\underline{m}+g$, and so we can rewrite the equation above as 
  \[
 X_{U_3\oplus U_4}y^{\underline{m}-u_3}z^{\underline{n}-u_4}= \sum_{\substack{ 0\leq e\leq u_4\\  u_2< f\leq \underline{m}} }y^{e+f}z^{\underline{n}+\underline{m}  -(e+f)}\prod_{r=1}^nx_r^{-\langle e +f+u_3-\underline{m},\alpha_r\rangle -\langle \alpha_r,\underline{m}+u_4 -(e+f)\rangle}.\]
Now, because $\Phi(\underline{m}-u_3)=\underline{n}-u_4$, where $\Phi$ is the Coxeter transformation, it follows that $$\langle u_3-\underline{m},\alpha_r\rangle=-\langle\alpha_r,u_4-\underline{n}\rangle;$$ hence
$\langle e +f+u_3-\underline{m},\alpha_r\rangle +\langle \alpha_r,\underline{m}+u_4 -(e+f)\rangle= \langle e +f,\alpha_r\rangle +\langle \alpha_r,\underline{n}+\underline{m} -(e+f)\rangle$
and thus

\begin{equation}\label{eq:Uyz}
 X_{U_3\oplus U_4}y^{\underline{m}-u_3}z^{\underline{n}-u_4}= \sum_{\substack{ 0\leq e\leq u_4\\  u_2< f\leq \underline{m}} }y^{e+f}z^{\underline{n}+\underline{m}  -(e+f)}\prod_{r=1}^nx_r^{-\langle e +f,\alpha_r\rangle -\langle \alpha_r,\underline{n}+\underline{m} -(e+f)\rangle}.
 \end{equation}
 
  Notice that formulas on equations \eqref{eq:NM}, \eqref{eq:U} and \eqref{eq:Uyz} differ only on  where the sum is taken. Thus, in order to finish the proof, it suffices to show that the summation of \eqref{eq:U} and \eqref{eq:Uyz} gives the formula on equation \eqref{eq:NM}. For this purpose, we must reorganize the indexes of the summations on equations \eqref{eq:U} and \eqref{eq:Uyz}. Consider the portion of $X_{U_1 \oplus U_2}$ corresponding to $N \subsetneq E \subset U_1$. For each submodule $F$ of $U_2$, there exists an exact sequence $0\rightarrow E'\rightarrow E\oplus F\rightarrow F'\rightarrow 0$, which allows us to conclude that $e+f=e'+f'$. When rewriting the mentioned portion we get
\begin{eqnarray*}
&  &\displaystyle\sum_{\substack{ \underline{n}< e\leq u_1\\  0\leq f\leq u_2} }y^{e+f}z^{\underline{n}+\underline{m} -(e+f)}\prod_{r=1}^nx_r^{-\langle e +f,\alpha_r\rangle -\langle \alpha_r,\underline{n}+\underline{m} -(e+f)\rangle} =\\
& =&\displaystyle \sum_{\substack{ u_4< e'\leq \underline{n}\\  u_2< f'\leq\underline{ m}} }y^{e'+f'}z^{\underline{n}+\underline{m} -(e'+f')}\prod_{r=1}^nx_r^{-\langle e' +f',\alpha_r\rangle -\langle \alpha_r,\underline{n}+\underline{m} -(e'+f')\rangle}.
\end{eqnarray*}
By comparing where the sums are taken in equations \eqref{eq:U}, \eqref{eq:Uyz} and in the equation above, item $(a)$ follows. 
\medskip

For proving item $(b)$, since $M=P_i[1]$ and taking Figure \ref{fig:cmrg} into account, we may assume $U_1=P_{i'}[1]$ and $U_3=P_{i''}[1]$, from where we get $H^0B=U_2$ and $H^0B'=U_4$. Thus it suffices to prove that  \[X_Nx_i=x_{i'}X_{U_2}y^{\underline{n}-u_2}
 +x_{i''}X_{U_4}z^{\underline{n} -u_4}
.\] By Definition \ref{clustercharacter},
$\displaystyle X_{U_2}y^{\underline{n}-u_2}=\sum_{0\leq f\leq u_2}y^{f+\underline{n}-u_2}z^{u_2-f}\prod_{r=1}^nx_r^{-\langle f,\alpha_r\rangle -\langle \alpha_r,u_2-f\rangle}.$

\medskip

\noindent Now, for each indecomposable submodule $F$ of $U_2$, there exists an exact sequence $0\rightarrow E\rightarrow N\oplus F\rightarrow U_2\rightarrow 0$, where $U_4\subset E\subseteq N$, and then $e+u_2=\underline{n}+f$. By rewriting the above equation, we obtain that
$\displaystyle X_{U_2}y^{\underline{n}-u_2}=\sum_{u_4< e\leq \underline{n}}y^{e}z^{\underline{n}-e}\prod_{r=1}^nx_r^{-\langle e+u_2-\underline{n},\alpha_r\rangle -\langle \alpha_r,\underline{n}-e\rangle}.$
Since \[\langle \underline{n}-u_2,\alpha_r\rangle=\left\lbrace 
\begin{matrix} 1, & \mbox{if } r=i\\
               -1,&\mbox{if } r=i'\\
               0, &\mbox{otherwise},                                                                        

\end{matrix}\right.\]  
it follows that 
$\displaystyle X_{U_2}y^{\underline{n}-u_2}=x_ix_{i'}^{-1}\sum_{u_4< e\leq\underline{ n}}y^{e}z^{\underline{n}-e}\prod_{r=1}^nx_r^{-\langle e,\alpha_r\rangle -\langle \alpha_r,\underline{n}-e\rangle}.$

\medskip

\noindent On the other hand, $\displaystyle X_{U_4}z^{\underline{n} -u_4}=\sum_{0\leq e\leq u_4}y^{e}z^{\underline{n}-e}\prod_{r=1}^nx_r^{-\langle e,\alpha_r\rangle -\langle \alpha_r,u_4-e\rangle};$ because 
\[\langle \alpha_r,\underline{n}-u_4\rangle=\left\lbrace 
\begin{matrix} 1, & \mbox{if } r=i\\
               -1,&\mbox{if } r=i''\\
               0, &\mbox{otherwise},                                                                        

\end{matrix}\right.\] the above equation can be rewritten as \[X_{U_4}z^{\underline{n} -u_4}=x_ix_{i''}^{-1}\sum_{0\leq e\leq u_4}y^{e}z^{\underline{n}-e}\prod_{r=1}^nx_r^{-\langle e,\alpha_r\rangle -\langle \alpha_r,\underline{n}-e\rangle}.\] Therefore
$x_{i'}X_{U_2}y^{\underline{n}-u_2}
 +x_{i''}X_{U_4}z^{\underline{n} -u_4}= x_iX_N$,
as desired.
\end{proof}

Although we are not focusing on friezes and quantized Chebyshev polynomials, the presented definition of cluster character with coefficients (def. \ref{clustercharacter}) was based on results established for these concepts. In the next two corollaries, we recollect some particular cases of the multiplication formula obtained above, which will be used later and also generalize the tridiagonal matrix formula for c-friezes (\cite{BRS}) and quantized Chebyshev polynomials (\cite{Dupont}).
\medskip

Indexing the indecomposable objects of $\mathcal{C}$  by $(i,j)=\tau^{-i}P_j$ for $1\leq j\leq n$ and $0\leq i\leq n-j+1$, one can assume that $N=(i-\beta,j+\beta)$ and $M=(i+1,j+\alpha-1)$  where  $0\leq\beta\leq i$ and $i+j+\alpha\leq n+1$. In this case, using the notation established in Figure \ref{fig:cmrg}, $U_1= (i-\beta, j+\alpha+\beta) $, $U_2= (i+1,j-1) $, $U_3= (i+j+1,\alpha-1)$ and $U_4=(i-\beta,\beta).$

\begin{corol}\label{proofconja} Let $(i,j)=\tau^{-i}P_j$ where $1\leq j\leq n$ and $0\leq i\leq n-j$. Then
\begin{enumerate}
   \item[$(i)$] $X_{(i,j)}X_{(i+j,1)}=X_{(i,j+1)}+X_{(i,j-1)}z_{i+j}y_{i+j+1}$
   
   \item[$(ii)$]   $X_{(i-\beta,j+\beta)}X_{(i+1,j+\alpha-1)}=X_{(i+1,j-1)}X_{(i-\beta,j+\alpha+\beta)}+X_{(i-\beta,\beta)}X_{(i+j+1,\alpha-1)}z^{u}y^{v}$ where $u=\dim(i-\beta,j+\beta)-\dim(i-\beta,\beta)$, $v=\dim(i+1,j+\alpha-1)-\dim(i+j+1,\alpha-1)$ and $0\leq\beta\leq i$.
   
   \item[$(iii)$]  \[X_{(i,j)}=\left|\begin{array}{ccccc}
                      X_{(i,1)}&y_{i+2}&0&\cdots &0\\
                      z_{i+1}&X_{(i+1,1)}&y_{i+3}&\cdots &0\\
                      0      &z_{i+2}&\ddots&& \vdots\\
                      \vdots &       &       && y_{i+j} \\
                      0      &\cdots &       0&z_{i+j-1}&X_{(i+j-1,1)}
                      
\end{array}\right|\] 
\end{enumerate}
\end{corol}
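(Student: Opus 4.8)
The plan is to establish the three parts of Corollary \ref{proofconja} as direct specializations of the multiplication formula in Theorem \ref{teo:multf}, using the explicit description of the objects $U_1,U_2,U_3,U_4$ given in the paragraph preceding the statement. The indexing convention $(i,j)=\tau^{-i}P_j$ together with the stated identifications reduces everything to substituting concrete index values into the already-proven formulas, so the work is organizational rather than conceptual.

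For part $(i)$, I would apply Theorem \ref{teo:multf}$(a)$ in the special case $\beta=0$ and $\alpha=1$, so that $N=(i,j)$ and $M=(i+1,j)$. Unwinding the general identifications gives $U=U_1\oplus U_2$ with $U_1=(i,j+1)$ and $U_2=(i+1,j-1)$, while $U'=U_3\oplus U_4$ with $U_3=(i+j+1,0)$ and $U_4=(i,0)$. Since $(i+j+1,0)$ and $(i,0)$ are zero objects (or contribute trivially), the term $X_{U'}$ collapses and the exponents of the $y$ and $z$ coefficients reduce, via the dimension-vector computation, to the single factor $z_{i+j}y_{i+j+1}$. The main thing to verify here is that the coefficient bookkeeping from Theorem \ref{teo:multf}$(a)$, namely $y^{\underline{\dim}M-\underline{\dim}U_3}z^{\underline{\dim}N-\underline{\dim}U_4}$, specializes exactly to $z_{i+j}y_{i+j+1}$; this is a short computation with the dimension vectors of the relevant string modules. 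Part $(ii)$ is then simply Theorem \ref{teo:multf}$(a)$ written out verbatim with the general values of $U_1,U_2,U_3,U_4$ substituted from the preceding paragraph, together with the stated abbreviations for the exponents $u$ and $v$; essentially no new argument is required beyond matching notation.

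For part $(iii)$, the tridiagonal determinant, I would argue by induction on $j$. Expanding the determinant along its last row (or last column) produces a two-term recurrence: the determinant equals $X_{(i+j-1,1)}$ times the $(j-1)\times(j-1)$ determinant, minus $y_{i+j}z_{i+j-1}$ times the $(j-2)\times(j-2)$ determinant. By the induction hypothesis these subdeterminants equal $X_{(i,j-1)}$ and $X_{(i,j-2)}$ respectively, so the recurrence becomes $X_{(i,j-1)}X_{(i+j-1,1)}-X_{(i,j-2)}y_{i+j}z_{i+j-1}$. It remains to recognize this as $X_{(i,j)}$, which is exactly the content of part $(i)$ after shifting indices appropriately (replacing $j$ by $j-1$ there). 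The base cases $j=1$ and $j=2$ are checked directly: for $j=1$ the determinant is the single entry $X_{(i,1)}$, and for $j=2$ it is $X_{(i,1)}X_{(i+1,1)}-y_{i+2}z_{i+1}$, which matches part $(i)$.

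I expect the main obstacle to be the careful identification of the coefficient monomials in part $(i)$, that is, confirming that the dimension-vector differences $\underline{\dim}M-\underline{\dim}U_3$ and $\underline{\dim}N-\underline{\dim}U_4$ collapse to the single simple roots $\alpha_{i+j+1}$ and $\alpha_{i+j}$ so that the coefficient is precisely $z_{i+j}y_{i+j+1}$ and not some larger product. Once this is pinned down, the determinant expansion in part $(iii)$ is routine linear algebra, and the sign conventions in the cofactor expansion must be tracked so that the off-diagonal product $y_{i+j}z_{i+j-1}$ enters with the correct sign to reproduce the multiplication formula from part $(i)$.
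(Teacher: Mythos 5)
Your overall strategy is exactly the paper's: parts (i) and (ii) are specializations of Theorem \ref{teo:multf}$(a)$ via the identifications of $U_1,\dots,U_4$ stated just before the corollary, and part (iii) follows from part (i) by induction on $j$ (the paper's proof consists of precisely this last sentence). Your treatment of part (ii) (the theorem verbatim) and of part (iii) (the tridiagonal recurrence $D_j = X_{(i+j-1,1)}D_{j-1}-y_{i+j}z_{i+j-1}D_{j-2}$, matched against part (i) with $j$ replaced by $j-1$, with the correct base cases) is fine.

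The genuine gap is in part (i): you chose the wrong specialization. Setting $\beta=0$, $\alpha=1$ in the paper's parameterization gives $N=(i,j)$ and $M=(i+1,j)$, whereas part (i) pairs $N=(i,j)$ with $M=(i+j,1)$; these differ for every $j\geq 2$. With your choice, Theorem \ref{teo:multf}$(a)$ yields
\[X_{(i,j)}X_{(i+1,j)}=X_{(i,j+1)}X_{(i+1,j-1)}+y^{\underline{\dim}(i+1,j)}z^{\underline{\dim}(i,j)},\]
a different (frieze-type) identity: the factor $X_{U_2}=X_{(i+1,j-1)}$ does not vanish (you silently drop it), and the coefficient exponents are the full interval dimension vectors $\alpha_{i+2}+\cdots+\alpha_{i+j+1}$ and $\alpha_{i+1}+\cdots+\alpha_{i+j}$, not simple roots, so the ``collapse to the single factor $z_{i+j}y_{i+j+1}$'' that you flag as the main point to verify is in fact false for your substitution. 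The correct specialization is the one that kills both $U_2$ and $U_3$: take the theorem's base indices to be $(i+j-1,1)$ with $\alpha=1$ and $\beta=j-1$, so that $N=(i,j)$, $M=(i+j,1)$, $U_1=(i,j+1)$, $U_2=(i+j,0)=0$, $U_3=(i+j+1,0)=0$, $U_4=(i,j-1)$; then $\underline{\dim}M-\underline{\dim}U_3=\alpha_{i+j+1}$ and $\underline{\dim}N-\underline{\dim}U_4=\alpha_{i+j}$, and the theorem reads exactly as part (i). Equivalently, since your part (ii) is correct, simply specialize it at $j=1$, $\alpha=1$ and rename indices. With part (i) repaired in this way, your induction for part (iii) goes through unchanged and coincides with the paper's argument.
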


\begin{proof}
By straightforward induction on $j$ and item $(i)$ the result in item $(iii)$ follows.  
\end{proof}

\begin{corol}\label{proofconjb} Let $(i,j)=\tau^{-i}P_j$. Then
\begin{enumerate}
   \item[$(i)$]   $x_jX_{(j-1,1)}=x_{j-1}y_j+x_{j+1}z_j$
   \item[$(ii)$]  $X_{(j,\beta -j)}x_{j+\alpha} = X_{(j+\alpha, \beta-j-\alpha)}x_jy^{\dim(j,\alpha)} + X_{(j,\alpha-1)}x_{\beta+1}z^{\dim (j+\alpha-1,\beta-j-\alpha+1)}$
 
   \item[$(iii)$]  \[z_1\cdots z_{j-1}x_j=\left|\begin{array}{ccccc}
                      x_1&y_1&0&\cdots &0\\
                      1&X_{(0,1)}&y_2&\cdots &0\\
                      0      &z_1&\ddots&& \vdots\\
                      \vdots &       &       && y_{j-1} \\
                      0      &\cdots &       0&z_{j-2}&X_{(j-2,1)}
                      
\end{array}\right|\]  
\end{enumerate}
\end{corol}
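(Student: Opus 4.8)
The plan is to prove Corollary \ref{proofconjb} by establishing part $(i)$ directly from the multiplication formula of Theorem \ref{teo:multf}$(b)$, then deriving part $(ii)$ as the full multiplication formula in the projective-shift setting, and finally obtaining the determinantal identity $(iii)$ by induction on $j$ using part $(i)$ together with cofactor expansion. The key observation is that these statements parallel those of Corollary \ref{proofconja}, but now one of the factors is the shift $x_j = X_{P_j[1]}$ of a projective rather than a genuine module, so Theorem \ref{teo:multf}$(b)$ is the relevant tool rather than part $(a)$.

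First I would prove $(i)$. Taking $N = (j-1,1) = \tau^{-(j-1)}P_1$ and $M = P_j[1]$, one checks that $\Ext^1_{\CC}(M,N)$ is one-dimensional, so Theorem \ref{teo:multf}$(b)$ applies and yields $X_N x_j = X_U\, y^{\underline{\dim}N - \underline{\dim}H^0U} + X_{U'}\, z^{\underline{\dim}N - \underline{\dim}H^0U'}$ for the two exchange triangles. The content is then to identify the middle objects $U$ and $U'$ of the exchange triangles as the shifts $P_{j-1}[1]$ and $P_{j+1}[1]$ respectively (equivalently, that $H^0U = H^0U' = 0$ so the characters reduce to $x_{j-1}$ and $x_{j+1}$), and to compute that the exponent vectors $\underline{\dim}N - \underline{\dim}H^0U$ collapse to the single coefficients $y_j$ and $z_j$. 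This is a small, explicit computation in $\CC_{\A_n}$ using the Auslander-Reiten structure, and it delivers exactly $x_j X_{(j-1,1)} = x_{j-1} y_j + x_{j+1} z_j$.

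For $(ii)$ I would again invoke Theorem \ref{teo:multf}$(b)$, this time with the general indecomposable module $N = (j,\beta-j)$ and $M = x_{j+\alpha}$ as the projective shift, reading off the two middle terms and the coefficient monomials from the indexing conventions $(i,j) = \tau^{-i}P_j$ fixed just before Corollary \ref{proofconja}; the dimension-vector bookkeeping is routine once the exchange triangles are written in the $(i,j)$-coordinates, matching the stated $y^{\underline{\dim}(j,\alpha)}$ and $z^{\underline{\dim}(j+\alpha-1,\beta-j-\alpha+1)}$ factors.

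Finally, for $(iii)$ I would induct on $j$. For the base case the $2\times 2$ determinant expands using $(i)$ directly. For the inductive step I would expand the $j\times j$ tridiagonal determinant along its last row (or last column), producing a two-term recurrence $D_j = X_{(j-2,1)}D_{j-1} - y_{j-1}z_{j-2}D_{j-2}$; comparing this with the recurrence that the left-hand sides $z_1\cdots z_{j-1}x_j$ satisfy—obtained by multiplying the relation in $(i)$ through by the appropriate product of $z$'s—closes the induction, exactly as the proof of Corollary \ref{proofconja}$(iii)$ does. I expect the main obstacle to be the careful identification of the exchange-triangle middle terms in part $(i)$ and the precise collapse of the coefficient exponent vectors to the single variables $y_j, z_j$: the combinatorics of which indecomposables appear, and verifying that the Hom-spaces vanish so that $H^0$ of the relevant summands is zero, is the step where sign and index errors are easiest to make, whereas the determinant induction in $(iii)$ is then purely formal.
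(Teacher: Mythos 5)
Your proposal is correct and follows essentially the same route as the paper: items $(i)$ and $(ii)$ are read off as specializations of Theorem \ref{teo:multf}$(b)$ (with the middle terms of the exchange triangles identified in the $(i,j)$-indexing, so that the shifted projectives contribute the $x$'s and $H^0$ of the middle terms the module characters), and item $(iii)$ follows by induction on $j$ from item $(i)$, exactly the paper's stated argument, via the standard tridiagonal (continuant) recurrence $D_j = X_{(j-2,1)}D_{j-1} - y_{j-1}z_{j-2}D_{j-2}$, which matches the recurrence obtained by multiplying the relation in $(i)$ by $z_1\cdots z_{j-2}$.
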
 
\begin{proof}
By straightforward induction on $j$ and item $(i)$ the result in item $(iii)$ follows.  
\end{proof}

\begin{rmk}
Observe that the latter items of the two above corollaries implies that, for Dynkin case $\A$, any cluster character is a polynomial in the cluster characters of the indecomposable simple objects.  
\end{rmk}
\medskip

\subsection{The mutation class of $\widehat{\mathbb{A}}_n$} This section is devoted to describe the mutation class of the biframed quiver $\widehat{\mathbb{A}}_n$ in terms of the cluster tilting objects in the cluster category associated to the principal part of  $\widehat{\mathbb{A}}_n$. For this purpose the following definition is needed, where an ice quiver is constructed from a cluster tilting object.

\begin{defi}\label{Q_T} Let $T$ be a cluster tilting object in  $\mathcal{C}=\mathcal{C}_Q$, where $Q$ is a Dynkin quiver. The ice quiver $Q_T$ of type $(n,3n)$ associated to $T$ is defined by:
\begin{enumerate}
        \item The principal part of $Q_T$ is the opposite quiver  of the endomorphism algebra $\mathrm{End}_{\mathcal{C}}T$.
        \item Let $T'_r$ be the other complement of the almost complete basic tilting object $T\slash T_r=\oplus_{j\neq r}T_j$ and consider the triangles 
        
          \[T_r\rightarrow B\rightarrow T'_r\rightarrow T_r[1]\  \mbox{and}\ T'_r\rightarrow B'\rightarrow T_r\rightarrow T'_r[1]\] as in subsection \ref{mutationT}.

        \begin{enumerate}
            \item[$(a)$]
             Let  $H^0(B)=B_1\oplus B_2$ where $B_1$ is the direct sum of all successors of $T_r$ in $\textrm{mod} kQ$ and $B_2$ is the complement of $B_1$ to $H^0(B)$.
             Let  $H^0(B')=B'_1\oplus B'_2$ where $B'_1$ is the direct sum of all successors of $T'_r$ in $\textrm{mod} kQ$ and $B'_2$ is the complement of $B'_1$ to $H^0(B')$.  If $T_r$ and $T'_r$ are modules and $T_r$ is a predecessor (resp. successor) of $T'_r$ in $\mathrm{mod} kQ$ then the number of arrows from $r$ to $n+j$ (resp. from $n+j$ to $r$) is given by the $j^{th}$ coordinate of 
                  $\dim T'_r-\dim B'_1$ (resp. $\dim T_r-\dim B_1)$ and the number of arrows from $r$ to $2n+j$ (resp. from $2n+j$ to $r$)  is given by the $j^{th}$ coordinate of $\dim T_r-\dim B'_2$ (resp. $\dim T'_r-\dim B_2$).
             \item[$(b)$]   Suppose that  $T'_r=P[1]$ (resp. $T_r=P[1]$) for some projective module $P$. Given $j$, the number of arrows from $n+j$ to $r$  (resp. from $r$ to $n+j$) is the $j^{th}$ coordinate of $\dim T_r-\dim H^0(B)$ (resp. $\dim T'_r-\dim H^0(B')$) and  the number of arrows from $r$ to $2n+j$ (resp. from $2n+j$ to $r$) is the $j^{th}$ coordinate of $\dim T_r -\dim H^0(B')$ (resp. $\dim T'_r-\dim H^0(B)$).    
        \end{enumerate}
          
\end{enumerate} 

\end{defi}

Notice that if $T=\oplus_{i=1}^n P_i[1]$ then $Q_T$ is the biframed quiver associated to the ordinary quiver of $\mathrm{End}_{\mathcal{C}}kQ$.



\begin{exmp}\label{exmp:qtemr} For a given cluster tilting object $T$ of $\CC_{\A_n}$, we describe the ice quiver $Q_T$  associated to $T$ at an arbitrary vertex $r$. Let $T_r$ be an indecomposable summand of $T$ and consider the exchange triangles 
          \[T_r\rightarrow B\rightarrow T'_r\rightarrow T_r[1]\  \mbox{and}\ T'_r\rightarrow B'\rightarrow T_r\rightarrow T'_r[1].\] As mentioned in subsection \ref{sec:multiAn}, these objects are disposed as in Figure \ref{fig:cmrg}, where $T_r$ is either $N$ or $M$ (and $T'_r$ is either $M$ or $N$, respectively). One can assume that $T_r=N$ since the other case is obtained by applying mutation. Notice that the principal part of $Q_T$ at vertex $r$ is 
\begin{center}
\begin{tikzcd}[column sep=small]
\label{fig:pp}
& r_3 \ar[dl] & & r_2 \ar[dl] & \\
r_1 \ar[rr] && r \ar[rr] \ar[ul] && r_4 \ar[ul] 
\end{tikzcd}
\end{center}
 where $r_j$ is the vertex associated to $U_j$. Now, in order to compute the arrows between frozen vertices and $r$, according to Definition \ref{Q_T}, we need to consider the following two cases: 
\begin{enumerate}
\item $T_r$ and $T'_r$ are $kQ$-modules  
\item $T_r$ or $T'_r$ is a shift of an indecomposable projective object $P$. 
\end{enumerate}
Since in both cases  $Q_T$ can be obtained in a similar way, we assume $(1)$. Following the notation of Corollary \ref{proofconja},  by indexing the objects of $\CC$ by $(i,j)=\tau^{-i}P_j$, we have that
\[T_r=(i-\beta,j+\beta),\   \  T'_r=(i+1,j+\alpha-1),\]   $B=U_1\oplus U_2$ and $B'=U_3\oplus U_4$, where  $U_1= (i-\beta, j+\alpha+\beta) $, $U_2= (i+1,j-1) $, $U_3= (i+j+1,\alpha-1)$ and $U_4=(i-\beta,\beta).$  Then, using Definition \ref{Q_T}$(a)$, in $Q_T$ there are arrows from $r$ to the frozen vertices $n+i+2,\cdots, n+i+j+1$ and $2n+i+1,\cdots, 2n+i+j$, because \[\dim T'_r-\dim B'_1=\dim T'_r-\dim U_3=\dim(i+1,j)\]
and  
 
 \[\dim T_r-\dim B'_2=\dim T_r -\dim U_4=\dim(i ,j ),\] respectively. Therefore $Q_T$ has the following shape at vertex $r$ 
\begin{center}

\begin{tikzcd}[column sep=0.01in]
\label{fig:QUEMR}
&& 2n+(i+1) & \cdots & 2n+(i+j) && \\
& & r_3 \ar[dll] && r_2 \ar[ld]&& \\
r_1 \ar[rrr] && &r \ar[uul] \ar[uur] \ar[ul]  \ar[dl]  \ar[dr]  \ar[rrr] &&& r_4 \ar[llu] \\
&& n+(i+1) & \cdots & n+(i+j+1) &&
\end{tikzcd}

\end{center}

\end{exmp}

\begin{rmk}\label{rmk:lazy} For proving the next results of this work it will  be sometimes necessary to consider the cases whether $T_r$ is $N$ or $M$  (as in Figure \ref{fig:cmrg}) and also whether $T_r$ (or $T'_r$) is a $k\A_n$-module or not, as in the example above. Having in mind the similarity of those cases, we focus our attention to $T_r=N$ and when $T_r$ and $T'_r$ are $k\A_n$-modules. 
\end{rmk}

It is desirable that  mutations of quivers and cluster tilting objects are compatible with  the correspondence defined  in \ref{Q_T}. In the following proposition one prove this result for the  Dynkin case $\A_n$.
\begin{prop}\label{commutes} 
	Let $T$ be a cluster tilting object in $\CC_{\A_n}$. The map $T\mapsto Q_T$ commutes with mutation, that is, $\mu_r(Q_T)=Q_{\mu_r(T)}$.
\end{prop}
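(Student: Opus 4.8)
The goal is to show $\mu_r(Q_T) = Q_{\mu_r(T)}$, i.e.\ that mutating the ice quiver at the non-frozen vertex $r$ (via the combinatorial rule $(i)$--$(iii)$ in Subsection \ref{mutationQ}) agrees with first mutating the cluster tilting object $T \mapsto \mu_r(T)$ and then applying the construction $Q_{(-)}$ of Definition \ref{Q_T}. Both quivers have the same vertex set $\widehat{Q}_0 = \{1,\dots,3n\}$, so it suffices to compare the number of arrows between every pair of vertices. My plan is to decompose the verification into three parts according to the location of the endpoints: arrows among non-frozen vertices (the principal part), arrows incident to the mutated vertex $r$ (both to non-frozen and to frozen vertices), and arrows not incident to $r$ involving at least one frozen vertex.

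\textbf{Step 1 (principal part).} First I would check that the principal parts agree. The principal part of $Q_T$ is the opposite of the Gabriel quiver of $\mathrm{End}_{\CC}T$, and it is classical that mutating the cluster tilting object $T$ at $r$ corresponds to quiver mutation of the (opposite) endomorphism quiver at $r$; this is precisely the Fomin--Zelevinsky rule applied to the Auslander--Reiten configuration in Figure \ref{fig:cmrg}. Concretely, using the local picture of Example \ref{exmp:qtemr} with $T_r = N$ and neighbours $r_1,\dots,r_4$ corresponding to $U_1,\dots,U_4$, I would verify that mutation at $r$ reverses the four arrows at $r$ and creates the composite arrows $r_1\to r_4$ and $r_2\to r_3$ (cancelling against existing $2$-cycles as dictated), and that this is exactly the local shape of $Q_{\mu_r(T)}$ around the new summand $T'_r = M$. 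By Remark \ref{rmk:lazy} it is enough to treat the representative case $T_r = N$ with $T_r,T_r'$ modules, the remaining cases being analogous.

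\textbf{Step 2 (arrows at the frozen vertices incident to $r$).} This is the heart of the argument and where I expect the main obstacle. Here I must show that the frozen-arrow data attached to $r$ transforms correctly under the combinatorial mutation rule. The idea is to compute, on the one hand, the arrows between $r$ and the frozen vertices $n+j, 2n+j$ directly for $Q_{\mu_r(T)}$ using Definition \ref{Q_T}(a) applied to the exchange pair $(T'_r, T_r) = (M,N)$ (note the roles of $N$ and $M$ swap, and the relevant triangles reverse), and on the other hand to run steps $(i)$--$(iii)$ of quiver mutation on $Q_T$. The delicate point is rule $(i)$: new arrows $u \to v$ created through paths $u \to r \to v$ where one of $u,v$ is frozen and the other is a non-frozen neighbour $r_k$. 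I would express all the arrow multiplicities as coordinates of dimension-vector differences (as in Example \ref{exmp:qtemr}, where $\dim T'_r - \dim B'_1$ and $\dim T_r - \dim B'_2$ gave the outgoing arrows), and then reconcile the two computations using the linear relations among the dimension vectors forced by the exchange triangles --- e.g.\ $\dim U_1 + \dim U_2 = \dim N + \dim M$ and the analogous relation for $U_3, U_4$ established in the proof of Theorem \ref{teo:multf}, together with $u_1 + u_2 = \underline{n} + \underline{m}$. The bookkeeping of which frozen indices receive arrows after reindexing by the $\tau^{-i}P_j$ coordinates must be carried out carefully, and making the paths-through-$r$ contribution match the swapped dimension-vector formula is the step most likely to require case analysis.

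\textbf{Step 3 (arrows away from $r$ and $2$-cycle cancellation).} Finally I would observe that arrows not incident to $r$ are unaffected by $\mu_r$ unless they are created or destroyed by steps $(i)$ and $(iii)$, and that the only such arrows involving frozen vertices are those passing through $r$, already analysed in Step 2; in particular arrows between two frozen vertices cannot exist (the construction never produces them) and arrows among the remaining non-frozen vertices are handled by Step 1. I would then confirm that every $2$-cycle produced by step $(i)$ --- whether between two non-frozen vertices or between a non-frozen and a frozen vertex --- is cancelled consistently with the direction data prescribed by Definition \ref{Q_T} for $\mu_r(T)$, so that the net arrow count matches on both sides. Assembling Steps 1--3 gives equality of $\mu_r(Q_T)$ and $Q_{\mu_r(T)}$ as ice quivers, completing the proof.
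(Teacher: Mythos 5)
Your proposal follows essentially the same route as the paper's proof: the paper likewise handles the principal part by citing the classical compatibility of cluster-tilting mutation with quiver mutation (\cite{BMR2}, Theorem 5.1), disposes of the arrows between $r$ and the frozen vertices by noting that the exchange pair $(T_r,T'_r)$ simply swaps (so these arrows reverse), and then carries out exactly the computation you outline in Step 2 --- comparing, in the $\tau^{-i}P_j$ coordinates, the frozen arrows at a non-frozen neighbour $r_1$ produced by rule $(i)$ and $2$-cycle cancellation in $\mu_r(Q_T)$ against those obtained from the new exchange triangles of $T_{r_1}$ inside $\mu_r(T)$ via Definition \ref{Q_T}. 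Your decomposition, reduction via Remark \ref{rmk:lazy}, and identification of the path-through-$r$ bookkeeping as the delicate point all match the paper's argument.
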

\begin{proof}
Let $T$ be a cluster tilting object in $\CC$. 
It is known that the result holds true for the principal part of $Q_T$ (\cite{BMR2}, Theorem 5.1). We then focus our attention to arrows between $r$ and frozen vertices and possible new arrows created by mutation at $r$. Taking into account Remark \ref{rmk:lazy}  $Q_T$ has the configuration presented in Example \ref{exmp:qtemr}.   Now, because mutation of cluster tilting object is an involution we obtain $(\mu_r(T))_r=T'_r$ and $(\mu_r(T))'_r=T_r$. As a consequence, by Definition \ref{Q_T},  in $Q_{\mu_r(T)}$ there are arrows from the frozen vertices  $n+i+2,\cdots, n+i+j+1$ and $2n+i+1,\cdots, 2n+i+j$ to $r$. Therefore, the arrows at vertex $r$ in $Q_T$ and in $Q_{\mu_r(T)}$ are opposite to one another, which is compatible with quiver mutation.
 
  To finish the proof, it suffices to verify that for each oriented path  $r_1\rightarrow r\rightarrow s $ (or $ s\rightarrow r\rightarrow r_1$), where $s$ is a frozen and $r_1$ is a non frozen vertex, the quivers $\mu_r(Q_T)$ and $Q_{\mu_r(T)}$ coincide at vertex $r_1$. Notice that $T_{r_1}$ is a direct summand of $B$, that is, $T_{r_1}=(i+1,j-1)$ or $T_{r_1}=(i-\beta, j+\alpha+\beta)$. Assume that $T_{r_1}=(i+1,j-1)$. The other case can be treated similarly. Consider the triangles 
  
  \[T_{r_1} \rightarrow B\rightarrow T'_{r_1}\rightarrow T_{r_1}[1]\  \mbox{and}\ T'_{r_1}\rightarrow B'\rightarrow T_{r_1}\rightarrow T'_{r_1}[1],\mbox{where}\]
   \[T'_{r_1}=(i-\beta,j+\beta -\ell), B = (i+j+1-\ell,\ell-1)\oplus(i-\beta,\beta) \mbox{ and } B'= (i+1,j-\ell-1)\oplus (i-\beta, j+\beta).\]
    According to Definition \ref{Q_T}, in $Q_T$ we have the following configuration between frozen vertices and $r_1$ 
\begin{center}
\begin{tikzcd}[column sep=tiny, font=\tiny]
\label{fig:r_1 em Q_T}
2n+(i+1) \ar[rd] & \cdots & 2n+(i+j-\ell) \ar[dl] \\
& r_1 & \\
n+(i+2) \ar[ur] & \cdots & n+(i+j-\ell +1) \ar[ul]
\end{tikzcd}
\end{center}
. Since there are arrows from $r$ to  the frozen vertices $n+i+2,\cdots, n+i+j+1$, $2n+i+1,\cdots, 2n+i+j$ and an arrow from $r_1$ to $r$, after mutating $Q_T$ at $r$, will be created arrows from $r_1$ to all such frozen vertices. However, by definition of quiver mutation, the two cycles have to be removed and then $\mu_r(Q_T)$ has the following configuration between frozen vertices and $r_1$: 
    \begin{center}
\begin{tikzcd}[column sep=tiny, font=\tiny]
\label{fig:r_1 em mu}
2n+(i+j-\ell +1) & \cdots & 2n+(i+j) \\
& r_1\ar[ul] \ar[ur] \ar[dl] \ar[dr ]& \\
n+(i+j-\ell +2) & \cdots & n+(i+j+1)
\end{tikzcd}
\end{center}

    Now for computing $Q_{\mu_r(T)}$ at vertex $r_1$, consider the triangles
        \[\mu_r(T)_{r_1} \rightarrow B\rightarrow (\mu_r(T))'_{r_1}\rightarrow \mu_r(T)_{r_1}[1]\  \mbox{and}\ (\mu_r(T))'_{r_1}\rightarrow B'\rightarrow \mu_r(T)_{r_1}\rightarrow (\mu_r(T))'_{r_1}[1].\] 
     Observe that $\mu_r(T)_{r_1}=T_{r_1}$, $B=(i+1,j+\alpha-1)\oplus(i+j+1-\ell,\ell-1)$, $B'=(i+j+1,\alpha-1)\oplus(i+1,j-1-\ell)$ and therefore  $(\mu_r(T))'_{r_1}=(i+j+1-\ell,\alpha-1+\ell)$. Again, by Definition \ref{Q_T}, when looking at frozen vertices and $r_1$, $Q_{\mu_r(T)}$ coincides with $\mu_r(Q_T)$, which finishes the proof.

\end{proof}

In case $\A_n$, it follows from the previous proposition that  all quivers of $\mathcal{M}(\widehat{\A})$ can be obtained  from cluster tilting objects. In fact, one shall prove in the next theorem that the correspondence $T\mapsto Q_T$ is a bijection.  

\begin{theorem}\label{teo:bij} 
	Let $\widehat{\mathbb{A}}_n$ be the biframed quiver of type $\mathbb{A}_n$ and let $\mathcal{C}=\mathcal{C}_{\mathbb{A}_n}$ be the cluster category associated to the principal part of $\widehat{\mathbb{A}}_n$. There exists a bijection between  the cluster tilting objects of $\mathcal{C}$ and the mutation class of $\widehat{\mathbb{A}}_n$ given by $T\mapsto Q_T$.  
\end{theorem}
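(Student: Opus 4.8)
The plan is to establish that $T\mapsto Q_T$ is a well-defined bijection between cluster tilting objects of $\CC_{\A_n}$ and the mutation class $\mathcal{M}(\widehat{\A}_n)$ by exploiting the compatibility with mutation proved in Proposition \ref{commutes} together with the connectivity of both the cluster-tilting exchange graph and the mutation class. First I would recall two structural facts from the preliminaries: by \cite{BMRRT} the cluster tilting objects of $\CC_{\A_n}$ form a connected graph under mutation (two objects joined by an edge when they differ in a single indecomposable summand), and by definition $\mathcal{M}(\widehat{\A}_n)$ is the connected set of ice quivers reachable from $\widehat{\A}_n$ by sequences of mutations at non-frozen vertices. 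The key anchoring observation, noted immediately after Definition \ref{Q_T}, is that the distinguished object $T_0=\oplus_{i=1}^n P_i[1]$ satisfies $Q_{T_0}=\widehat{\A}_n$, so the map sends a base point to a base point.

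Next I would prove surjectivity. Given any ice quiver $\widehat{Q}\in\mathcal{M}(\widehat{\A}_n)$, by definition there is a finite sequence of non-frozen mutations $\mu_{r_k}\cdots\mu_{r_1}$ carrying $\widehat{\A}_n$ to $\widehat{Q}$. Applying the same sequence of cluster-tilting mutations to $T_0$ yields an object $T=\mu_{r_k}\cdots\mu_{r_1}(T_0)$, and by iterating Proposition \ref{commutes} we get $Q_T=\mu_{r_k}\cdots\mu_{r_1}(Q_{T_0})=\mu_{r_k}\cdots\mu_{r_1}(\widehat{\A}_n)=\widehat{Q}$. This shows every quiver in the mutation class is of the form $Q_T$, so the map is surjective onto $\mathcal{M}(\widehat{\A}_n)$; the same argument shows the image lands in $\mathcal{M}(\widehat{\A}_n)$, hence the map is well-defined as a map into the mutation class. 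For injectivity, I would argue that the principal part of $Q_T$ is by construction the opposite of the endomorphism quiver of $\End_{\CC}T$, which already recovers the quiver of the cluster tilted algebra; in type $\A_n$ the quiver of the cluster tilted algebra together with the classification of cluster tilting objects via triangulations of the polygon determines $T$ up to isomorphism. More directly, I would show that two distinct cluster tilting objects have distinct images by using that both the exchange graph of cluster tilting objects and the mutation graph of $\mathcal{M}(\widehat{\A}_n)$ are covers of the same underlying $n$-regular graph (the $\A_n$ cluster exchange graph), and that Proposition \ref{commutes} realizes $T\mapsto Q_T$ as a graph morphism commuting with all $n$ mutation edges out of each vertex.

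The main obstacle, and the step needing genuine care, is injectivity: surjectivity is essentially formal once Proposition \ref{commutes} is in hand, but the map a priori could collapse two non-isomorphic cluster tilting objects onto the same ice quiver if their endomorphism quivers and frozen-arrow data happened to coincide. The cleanest way I would resolve this is to observe that in type $\A_n$ the cluster tilting objects of $\CC_{\A_n}$ are in bijection with triangulations of an $(n+3)$-gon, and this set is in bijection with the vertices of the connected $n$-regular exchange graph on which mutations act freely in the sense that distinct objects are separated by mutation sequences. Since $T\mapsto Q_T$ commutes with mutation and sends the base point $T_0$ to $\widehat{\A}_n$, it descends to a morphism of these exchange graphs that is a bijection on the star of every vertex (each of the $n$ mutations produces a distinct neighbour, as the exchanged summand $T'_r\neq T_r$ changes the principal part, hence $\mu_r(Q_T)\neq Q_T$). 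A covering-space style argument then forces the global map to be a bijection: a mutation-equivariant map between connected graphs that is a local bijection and matches base points must be an isomorphism of graphs, which gives injectivity and completes the proof that $T\mapsto Q_T$ is the desired bijection.
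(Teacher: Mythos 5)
Your surjectivity argument is exactly the paper's: anchor at $Q_{\oplus P_i[1]}=\widehat{\A}_n$ and transport along mutation sequences via Proposition \ref{commutes}. The gap is in injectivity, and it is not a technicality; it is the actual content of the theorem. Your first sub-argument claims that in type $\A_n$ the quiver of the cluster-tilted algebra (the principal part of $Q_T$) determines $T$ up to isomorphism. This is false: the endomorphism quiver determines the corresponding triangulation of the $(n+3)$-gon only up to rotation, i.e.\ it determines $T$ only up to a power of $\tau$ (for $\A_3$ there are $14$ cluster tilting objects but only $4$ principal parts). This $\tau$-ambiguity is precisely the potential collapse the proof must rule out, and it can only be ruled out using the \emph{frozen} arrows of $Q_T$, which your injectivity argument never touches.

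Your second sub-argument, the covering-space claim, is also invalid as stated: a mutation-equivariant map between connected graphs that is a local bijection and matches base points is exactly a covering map of graphs, and covering maps need not be injective (the hexagon covers the triangle by a local bijection; matching one pair of base points does not help). To promote a covering to an isomorphism you must show some fiber is a single point, and that is what your proposal omits. The paper does exactly this: from equality of principal parts it deduces $T'=\tau^i T$, uses that $\tau$ commutes with mutation and the connectivity of the exchange graph to transport the problem to the base point, and then shows by explicit computation of the arrows between vertex $n$ (or $n-1$) and the frozen vertices that $Q_{\tau^i(k\A_n[1])}\neq \widehat{\A}_n$ for $1\leq i\leq n+2$ --- i.e.\ the fiber over $\widehat{\A}_n$ is trivial because the frozen data breaks the rotational symmetry. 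Without this computation (or some substitute that genuinely uses the coefficient arrows of Definition \ref{Q_T}), your argument cannot distinguish $T$ from $\tau^i T$ and the proof of injectivity fails.
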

\begin{proof} Let $Q$ be a quiver in $\mathcal{M}(\widehat{\A})$, that is, the mutation class of the biframed quiver $\widehat{\A}$. In this situation, there exists a sequence of mutations $\mu$ such that $\mu(Q)=\widehat{\A}_n$. On the other hand, it follows by  Definition \ref{Q_T} that $Q_{k\widehat{\A}_n[1]}=\widehat{\A}_n$ and therefore $\mu(Q)=Q_{k\widehat{\A}_n[1]}$. According to Proposition \ref{commutes},  $Q=Q_{\mu^{-1}(k\widehat{\A}_n[1])}$, which entails that there is a surjective map from the set of cluster tilting objects of $\CC$ to $\mathcal{M}(\widehat{\A})$ given by $T\mapsto Q_T$.

For proving that this map is injective it suffices to verify that the quivers $\widehat{\A}_n$ and $Q_{\tau^i(k\A_n[1])}$  are not isomorphic for $1\leq i\leq n+2$. Indeed,  let $T$ and $T'$ be cluster tilting objects of $\CC$ such that $Q_T=Q_{T'}$. In particular, the principal parts of $Q_T$ and $Q_{T'}$ are the same, which allows us to conclude that $T'=\tau^i T$ for some $0\leq i\leq n+2$. Since the cluster tilting graph is connected, \cite{BMRRT}, there exists a sequence of cluster tilting mutations $\mu$ such that $\mu(T)=k\A_n[1]$ and then

\[\mu(T')=\mu\tau^i(T)=\tau^i\mu(T)=\tau^i(k\A_n[1]).\] Now, from Proposition \ref{commutes}, it follows that

\[Q_{\mu(T)}=\mu(Q_T)=\mu(Q_{T'})=Q_{\mu (T')},\] that is  
       
        \[\widehat{\A}_n=Q_{k\A_n[1]}=Q_{\tau^i(k\A_n[1])}.\] Because of our claim, $i=0$ and $T=T'$.  Then we focus our efforts to prove that this claim holds true. First, lets do the comparison of the quivers $\widehat{\A}_n$ and $Q_{\tau^i(k\A_n[1])}$ with $1\leq i < n$. For this purpose we look at vertex $n$ of $Q_{\tau^i(k\A_n[1])}$, which is associated to the $n^{th}$ indecomposable summand  $\tau^i(P_n[1])$  of the cluster tilting object $\tau^i(k\A_n[1])$.
Notice that we have the following triangles

\[S_{n-i+1} \rightarrow 0 \rightarrow S_{n-i} \rightarrow S_{n-i+1}[1]\] and \[S_{n-i} \rightarrow \left(\begin{smallmatrix} n-i+1 \\ n-i \end{smallmatrix}\right) \rightarrow S_{n-i+1} \rightarrow S_{n-i}[1].\] 
\\
\noindent Naming $B = B_1 = B_2 = 0$, $B'=B'_1 = \left(\begin{smallmatrix} n-i+1 \\ n-i \end{smallmatrix}\right)$ and $B'_2 = 0$, it follows from Definition \ref{Q_T}$(a)$ that, at vertex $n$, the ice quiver  $Q_{\tau^i(k\A_n[1])}$ has the configuration 

\begin{center}
\begin{tikzcd}[column sep=tiny]
3n-i \ar[d] & \\
n \ar[r] & n-1 \\
2n-i+1 \ar[u] & 
\end{tikzcd}
\end{center}
Thus $\widehat{\A}_n$ and $Q_{\tau^i(k\A_n[1])}$ are not isomorphic for  $1\leq i < n$. The remaining cases, $i=n, n+1, n+2$, can be obtained in a similar way, looking at  vertices $n-1$ or $n$.  
\end{proof} 

\subsection*{The main theorem} In this section, we establish the connection between cluster algebras with coefficients associated to the biframed quiver of type $\A_n$ and the cluster character presented in Definition \ref{clustercharacter}. For this purpose, the multiplication formula presented in \ref{teo:multf} will be of great importance since it plays the same role as the exchange relation for cluster algebras.

\begin{theorem}\label{main theorem} 
  The cluster variables of a cluster algebra with  coefficients associated to the biframed quiver $\widehat{Q}$ of type $\mathbb{A}_n$ are precisely the characters $X_V$ when $V$ runs over the indecomposable rigid objects in $\mathcal{C}_{\A_n}$.
  \end{theorem}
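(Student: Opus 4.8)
The plan is to establish a bijection between cluster variables of $\mathcal{A}(\textbf{x},\widehat{\A}_n)$ and characters $X_V$ for $V$ indecomposable rigid in $\CC_{\A_n}$ by transporting the entire combinatorial structure through the correspondence $T\mapsto Q_T$ of Theorem \ref{teo:bij}. First I would define a map $\Psi$ from seeds of $\mathcal{A}(\textbf{x},\widehat{\A}_n)$ to pairs $(T, \textbf{cluster})$ using the inverse of $T\mapsto Q_T$ on the quiver component, and defining the cluster component by applying the character map $X_?$ to the indecomposable summands of $T$. Concretely, to the initial seed with quiver $\widehat{\A}_n$ I would associate the cluster tilting object $k\A_n[1]=\oplus_{i=1}^n P_i[1]$, since Theorem \ref{teo:bij} identifies $Q_{k\A_n[1]}=\widehat{\A}_n$, and I would match the initial cluster variables $x_i$ with $X_{P_i[1]}=x_i$ from Definition \ref{clustercharacter}.

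The heart of the argument is to show that seed mutation on the cluster algebra side corresponds, summand by summand, to cluster tilting mutation on the categorical side, with the exchange relation \eqref{exrel} matching the multiplication formula of Theorem \ref{teo:multf}. I would proceed by induction on the length of a mutation sequence. The inductive step requires two compatibilities: first, that $\mu_r(Q_T)=Q_{\mu_r(T)}$, which is exactly Proposition \ref{commutes}; and second, that if the $r$-th summand $T_r$ of $T$ carries the cluster variable $X_{T_r}$, then the exchange relation produces precisely $X_{T'_r}$, where $T'_r$ is the mutated summand. For the latter I would read off the frozen-vertex arrows from $Q_T$ as described in Definition \ref{Q_T} and Example \ref{exmp:qtemr}, compute the two monomials $\prod_{v\to r}u_v$ and $\prod_{r\to v}u_v$ appearing in \eqref{exrel}, and check that substituting $u_v=X_{U_v}$ for the principal-part neighbors and $u_v=y_v$ or $z_v$ for the frozen neighbors reproduces exactly the right-hand side of the multiplication formula in Theorem \ref{teo:multf}(a) or (b). This is where the precise bookkeeping of the exponents $y^{\underline{\dim}M-\underline{\dim}U_3}z^{\underline{\dim}N-\underline{\dim}U_4}$ in the multiplication formula must line up with the frozen-arrow counts $\dim T'_r-\dim B'_1$ and $\dim T_r-\dim B'_2$ recorded in Definition \ref{Q_T}(a).

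Having established this step-by-step correspondence, surjectivity onto the set of characters follows because every indecomposable rigid object of $\CC_{\A_n}$ appears as a summand of some cluster tilting object, and the cluster tilting graph is connected (\cite{BMRRT}), so every such object is reached by some mutation sequence from $k\A_n[1]$; conversely, every cluster variable arises from a seed in the mutation class, which by Theorem \ref{teo:bij} corresponds to a cluster tilting object, and its non-frozen entries are the characters of the summands. Thus the two sets coincide.

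I expect the main obstacle to be the exponent-matching in the inductive step: verifying that the coefficient monomials produced by the exchange relation \eqref{exrel} at a frozen-laden vertex agree on the nose with the coefficient powers $y^{\bullet}z^{\bullet}$ in Theorem \ref{teo:multf}. The difficulty is that Theorem \ref{teo:multf} is stated only for the two standard configurations ($T_r=N$ with both modules, or $M=P_i[1]$), whereas a general mutation may present $T_r$ as either $N$ or $M$ and with either summand a shifted projective; as flagged in Remark \ref{rmk:lazy}, these cases are handled by symmetry, but making that reduction rigorous—checking that applying a single mutation swaps the roles consistently so that the already-verified case suffices—requires care. A secondary subtlety is ensuring that the frozen variables $y_i,z_i$ are never mutated (they are attached to frozen vertices) and hence persist unchanged as genuine coefficients throughout, so that the final algebra is generated over $\Q[\textbf{y},\textbf{z}^{\pm1}]$ exactly as claimed.
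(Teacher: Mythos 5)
Your proposal follows essentially the same route as the paper's own proof: induction on the length of the mutation sequence from the initial seed, with $X_{P_i[1]}=x_i$ as the base case, Proposition \ref{commutes} and Theorem \ref{teo:bij} providing the quiver-side bookkeeping, and the exchange relation at a vertex of $Q_T$ (as pictured in Example \ref{exmp:qtemr}) matched against the multiplication formula—which the paper invokes in the packaged form of Corollaries \ref{proofconja}$(ii)$ and \ref{proofconjb}$(i)$ rather than Theorem \ref{teo:multf} directly. The obstacles you flag (exponent matching at frozen vertices and the case reduction of Remark \ref{rmk:lazy}) are precisely the points the paper's argument rests on, so your plan is sound and faithful to the published proof.
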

  \begin{proof}  Observe that, by definition, the character of $P_r[1]$ is the initial cluster variable $x_r$. Since any cluster tilting object in $\CC$ can  be obtained from $k\A_n[1]$ by a finite number $\delta$ of mutations (\cite{BMRRT}, Proposition 3.5), we  prove by induction on $\delta$ that the character of each indecomposable direct summand of a tilting object $T$ of $\CC$ is a  cluster variable.

Let $(\widehat{\A}_n, \mathcal{X})$ be the initial seed of $\mathcal{A}$. For $\delta=1$, using what it was settled in subsection \ref{Biframed},  the new cluster variable obtained by mutating the initial seed at $r$ is given by the following relation

\[x_rx_r'=z_rx_{r+1}+x_{r-1}y_r.  \]  
 Thus, using Corollary \ref{proofconjb}$(i)$, we get that $z_rx_{r+1}+x_{r-1}y_r = x_rX_{(r-1,1)}$, that is, the character  $X_{(r-1,1)}=x_r'$ is a cluster variable and the new seed $(\mu_r(\widehat{\A}_n),\mathcal{X}')$ is such that each cluster variable of $\mathcal{X}'$ is a character of an indecomposable summand of $\mu_r(k\A_n[1])$, since $P_r[1]$ is replaced by $S_r=(r-1,1)$ after cluster tilting mutation at $r$.

 Let $\mu$ be a sequence of $\delta$ mutations and $\mu(\widehat{\A}_n, \mathcal{X})=(Q,\mathcal{U})$. Consider a cluster tilting object $T$ such that $Q_T=Q$.    Suppose by induction that each cluster variable $u_r$ in $\mathcal{U}$  is the character $X_{T_r},$ where $T_r$ is an indecomposable summand of $T$. According to Remark \ref{rmk:lazy} and Example \ref{exmp:qtemr}, $Q$ has the following configuration at vertex $r$:

\begin{center}
\begin{tikzcd}[column sep=0.01in]
&& 2n+(i+1) & \cdots & 2n+(i+j) && \\
& & r_3 \ar[dll] && r_2 \ar[ld]&& \\
r_1 \ar[rrr] && &r \ar[uul] \ar[uur] \ar[ul]  \ar[dl]  \ar[dr]  \ar[rrr] &&& r_4 \ar[llu] \\
&& n+(i+1) & \cdots & n+(i+j+1) &&
\end{tikzcd}
\end{center}

Hence mutation at $r$ gives rise to the relation

   \[u_{r}u_{r}'=u_{r_1}u_{r_2}+u_{r_3}u_{r_4}z^{\dim(i,j)}y^{\dim(i,j+1)}\] and then, by the induction hypothesis, 
   \[X_{T_{r}}u_{r}'=X_{T_{r_1}}X_{T_{r_2}}+X_{T_{r_3}}X_{T_{r_4}}z^{\dim(i,j)}y^{\dim(i,j+1)}\]
  It follows from the correspondence $T\mapsto Q_T$ given in Theorem \ref{teo:bij} and the discussion in  Example \ref{exmp:qtemr} that \[T_{r_1}\oplus T_{r_2}=(i+1,j-1)\oplus (i-\beta,j+\alpha+\beta) \mbox{ and } T_{r_3}\oplus T_{r_4}= (i+j+1,\alpha-1) \oplus(i-\beta,\beta).\] By applying Corollary \ref{proofconja}$(ii)$ we conclude that $u'_r=X_{(i+1,j+\alpha-1)}=X_{T_r'}$, which finishes the induction. Since any cluster variable lies in a seed which can be obtained from the initial seed by a finite number of mutations the result follows.    
\end{proof}
\medskip

\begin{corol}
 The cluster variables of the cluster algebra $\mathcal{A}(\textbf{x},\widehat{\A}_n)$ are polynomials over $\Q[\textbf{y},\textbf{z}^{\pm 1}]$ in the characters of the simple modules and the character of the shift of the simple projective, that is, \[\mathcal{A}(\textbf{x},\widehat{\A}_n)=\Q[\textbf{y},\textbf{z}^{\pm 1}][x_1,X_{S_1},\cdots,X_{S_n}].\]  

\end{corol}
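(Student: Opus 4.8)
The plan is to prove the two inclusions separately, using Theorem \ref{main theorem} to pin down the cluster variables and the tridiagonal determinant identities of Corollaries \ref{proofconja} and \ref{proofconjb} to rewrite them in terms of the proposed generators. By Theorem \ref{main theorem} the cluster variables of $\mathcal{A}(\textbf{x},\widehat{\A}_n)$ are exactly the characters $X_V$ with $V$ an indecomposable rigid object of $\CC_{\A_n}$, so $\mathcal{A}(\textbf{x},\widehat{\A}_n)$ is generated over the coefficient ring by these characters. Under the indexing $(i,j)=\tau^{-i}P_j$ the rigid indecomposables split into the genuine modules, with characters $X_{(i,j)}$ for $j\ge 1$, and the shifts $P_j[1]$ of the indecomposable projectives, with characters $X_{P_j[1]}=x_j$. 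Since the simple modules are $S_r=(r-1,1)$, the characters $X_{S_1},\dots,X_{S_n}$ are precisely the diagonal entries $X_{(0,1)},\dots,X_{(n-1,1)}$ occurring in both determinants.

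For the inclusion $\mathcal{A}(\textbf{x},\widehat{\A}_n)\subseteq \Q[\textbf{y},\textbf{z}^{\pm 1}][x_1,X_{S_1},\dots,X_{S_n}]$ I would treat the two families in turn. Corollary \ref{proofconja}$(iii)$ expresses each module character $X_{(i,j)}$ as a tridiagonal determinant whose diagonal consists of the $X_{(k,1)}=X_{S_{k+1}}$ and whose off-diagonal entries are the coefficients $y_i$ and $z_i$; hence $X_{(i,j)}\in\Q[\textbf{y},\textbf{z}][X_{S_1},\dots,X_{S_n}]$. Corollary \ref{proofconjb}$(iii)$ writes $z_1\cdots z_{j-1}\,x_j$ as a tridiagonal determinant in $x_1,X_{(0,1)},\dots,X_{(j-2,1)}$ and the coefficients, so $x_j=(z_1\cdots z_{j-1})^{-1}\det(\cdots)\in\Q[\textbf{y},\textbf{z}^{\pm 1}][x_1,X_{S_1},\dots,X_{S_n}]$. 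Since every cluster variable therefore lies in the right-hand ring, and the coefficient ring does too, the inclusion follows.

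For the reverse inclusion I would verify that each proposed generator already belongs to $\mathcal{A}(\textbf{x},\widehat{\A}_n)$: the variable $x_1=X_{P_1[1]}$ lies in the initial cluster; each $X_{S_r}=X_{(r-1,1)}$ is a cluster variable, being the character of the rigid indecomposable $S_r$ (equivalently, the output of the first mutation recorded in Corollary \ref{proofconjb}$(i)$); and the $y_i,z_i$ are the frozen variables. It then remains only to account for the negative powers of the $z_i$ allowed on the right-hand side.

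This last point is the step I expect to be the main obstacle. By the Laurent phenomenon every element of $\mathcal{A}(\textbf{x},\widehat{\A}_n)$ is a $\Q[\textbf{y},\textbf{z}]$-Laurent polynomial in $x_1,\dots,x_n$, so a bare $z_i^{-1}$ is not itself a $\Q[\textbf{y},\textbf{z}]$-combination of cluster variables; the equality must therefore be read with the coefficients $z_i$ inverted in the ground ring, that is, after localizing at $z_1\cdots z_n$, the usual convention for coefficients of geometric type. Granting this, $z_i^{-1}\in\mathcal{A}(\textbf{x},\widehat{\A}_n)$ and the reverse inclusion is immediate. The genuine content of the corollary is thus the forward inclusion, which reduces entirely to the two determinant identities, and the asymmetry between $\textbf{y}$ and $\textbf{z}^{\pm1}$ is explained by the fact that the module characters need the coefficients only polynomially, whereas expressing the shifted projectives $x_j$ through the simple characters forces division by the $z_i$.
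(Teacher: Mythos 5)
Your proposal is correct, and its core is exactly the paper's proof: the paper disposes of this corollary in one line, citing Theorem \ref{main theorem} together with the determinant identities of Corollaries \ref{proofconja}$(iii)$ and \ref{proofconjb}$(iii)$, which is precisely your forward inclusion (module characters via the first determinant, the $x_j=X_{P_j[1]}$ via the second, divided by $z_1\cdots z_{j-1}$). You go beyond the paper in two ways: you check the reverse inclusion explicitly, and you flag the $z_i^{-1}$ issue. On the latter you are right, and more careful than the paper: with the paper's own definition of the cluster algebra as the $\Q[\textbf{y},\textbf{z}]$-subalgebra of the ambient rational function field generated by the cluster variables, the Laurent phenomenon indeed gives $z_i^{-1}\notin\mathcal{A}(\textbf{x},\widehat{\A}_n)$, so the displayed equality is literally false as written and becomes true only after localizing at $z_1\cdots z_n$ (equivalently, reading the ground ring as $\Q[\textbf{y},\textbf{z}^{\pm 1}]$, the geometric-type convention). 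The paper's one-line proof silently passes over this; the honest content of the corollary, and all that the cited results actually deliver, is the first sentence of the statement, i.e., your forward inclusion.
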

\begin{proof}
It follows directly from Theorem \ref{main theorem} and Corollaries \ref{proofconja}$(iii)$ and \ref{proofconjb}$(iii)$.  
\end{proof}

It is well known that any cluster seed is uniquely determined by its cluster \cite{cluster2}. In the next corollary we will prove that any cluster seed of a cluster algebra associated to the biframed quiver $\widehat{\A}_n$ is also uniquely determined by its quiver. 

\begin{corol} 
\end{corol}
\begin{enumerate}
\item[$(a)$] Any seed of the cluster algebra $\mathcal{A}_{\widehat{\A}_n}$ is uniquely determined by its quiver.
\item[$(b)$] Cluster tilting objects corresponds to clusters. 
\end{enumerate}
\begin{proof}
Observe that, according to the proof of Theorem \ref{main theorem}, if $(Q,\mathcal{U})$ is a seed of the cluster algebra $\mathcal{A}_{\widehat{\A}_n}$ and $T$ is the cluster tilting object associated to $Q$ then the mutable cluster variables $u_r$ in $\mathcal{U}$ are given by $X_{T_r}$, for $r=1,\cdots,n$. 
\end{proof}


\section{A Multiplication Formula for Dynkin Case}

As it was seen in Dynkin case $\A$, the multiplication formula (Theorem \ref{teo:multf}) for cluster characters with coefficients plays a special role in giving a connection with cluster algebras with coefficients associated to a biframed quiver. In this section we present this same formula for the others Dynkin types, but only when one of the exchange triangles is almost split; because of this restriction to triangles, we could not obtain the correspondence between cluster tilting objects and clusters (as done for case $\mathbb{A}$). However, it is still possible to prove that all cluster characters are cluster variables. 

\begin{theorem}\label{auslander} Let $Q$ be a quiver of Dynkin type and $H^0=\Hom(\oplus_i P_i,\_):\mathcal{C}_Q\rightarrow\mathrm{mod}kQ$. Let $M$ and $N$ be indecomposable objects in cluster category $\mathcal{C}_Q$ such that $\Ext^1_{\mathcal{C}_Q}(M,N)$ is one-dimensional. 
\begin{itemize}
 \item[$(a)$]
    If $N=\tau M$  where $M$ is a non-projective module, then   \[X_NX_M=X_{B}+z^{\underline{\dim}N}y^{\underline{\dim}M};\]

\item[$(b)$] If $M=P_i[1]$ for an $i\in Q_0$ and $N=\tau M$, then \[X_NX_M=X_{B}y^{\underline{\dim}N-\underline{\dim}H^0B}
 +z^{\underline{\dim}N };
\]

\item[$(c)$] If $N=P_i[1]$ and $M=P_i$ for an $i\in Q_0$, then \[X_NX_M=X_{B}z^{\underline{\dim}M-\underline{\dim}H^0B}
 +y^{\underline{\dim}M };
\]

\end{itemize}  where $B$ is the unique object such that there exists an almost split triangle  $N\rightarrow B\rightarrow M\rightarrow N[1].$

\end{theorem}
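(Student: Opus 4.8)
The plan is to prove Theorem \ref{auslander} by the same Grassmannian-of-submodules comparison used in Theorem \ref{teo:multf}, exploiting the fact that for an almost split triangle $N\to B\to M\to N[1]$ in a Dynkin cluster category the object $N=\tau M$ is forced, so the triangle is essentially the almost split sequence in $\mathrm{mod}\,kQ$ (up to shifts). Recall from Definition \ref{clustercharacter} that $X_N X_M = X_{N\oplus M}$ is a single sum indexed by pairs $(e,f)$ with $0\le e\le \underline{\dim}N$ and $0\le f\le \underline{\dim}M$, with monomial weight $y^{e+f}z^{\underline{n}+\underline{m}-(e+f)}\prod_r x_r^{-\langle e+f,\alpha_r\rangle-\langle\alpha_r,\underline{n}+\underline{m}-(e+f)\rangle}$. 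The strategy in all three cases is to split this index set into the part coming from subobjects of $B$ (which will assemble into $X_B$ up to a monomial correction) and the single leftover term $(e,f)=(\underline{\dim}N,0)$ or $(0,\underline{\dim}M)$, which produces the pure coefficient monomial $z^{\underline{\dim}N}y^{\underline{\dim}M}$ (resp.\ $z^{\underline{\dim}N}$, $y^{\underline{\dim}M}$).

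Concretely, in case $(a)$ I would first note that since the triangle is almost split and both $N,M$ are modules, $B$ is the middle of the almost split sequence $0\to N\to B\to M\to 0$, so $\underline{\dim}B=\underline{\dim}N+\underline{\dim}M$ and every submodule $G\subseteq B$ restricts to a short exact sequence relating a submodule of $N$ and a submodule of $M$. The key combinatorial input is the analogue of the exact-sequence bookkeeping in Theorem \ref{teo:multf}: a submodule $G\le B$ with dimension vector $g$ yields, via the snake-type argument, a pair $(e,f)$ with $e+f=g$, and conversely the only pair $(e,f)$ in the index set of $X_{N\oplus M}$ that does \emph{not} arise from a submodule of $B$ is the extreme pair $e=\underline{\dim}N$, $f=0$ (corresponding to the non-split extension). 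Using $\chi(Gr_g(B))=\sum_{e+f=g}\chi(Gr_e(N))\chi(Gr_f(M))$ away from that extreme term, and the Coxeter/Euler-form identity $\Phi(\underline{\dim}M)=\underline{\dim}N$ (which holds because $N=\tau M$) to match the $x$-exponents exactly as in the proof of Theorem \ref{teo:multf}, one checks that the full index set of $X_{N\oplus M}$ decomposes as (the submodules-of-$B$ part) $\sqcup$ (the single extreme pair). The submodule part reassembles to $X_B$ since $\underline{\dim}B=\underline{n}+\underline{m}$ makes the monomial weights coincide without correction, and the extreme pair gives $y^{0}z^{\underline{n}+\underline{m}}\prod_r x_r^{-\langle\underline{n}+\underline{m},\alpha_r\rangle}$; but because of the shift structure (one checks the surviving $x$-monomial collapses), this reduces precisely to $z^{\underline{\dim}N}y^{\underline{\dim}M}$.

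For cases $(b)$ and $(c)$ the difference is that one of $N,M$ is a shifted projective $P_i[1]$ rather than a module, so $H^0$ kills it and the weight gains a correction factor $y^{\underline{\dim}N-\underline{\dim}H^0B}$ (resp.\ $z^{\underline{\dim}M-\underline{\dim}H^0B}$); here I would mimic the item $(b)$ argument of Theorem \ref{teo:multf}, where $X_{P_i[1]}=x_i$ is treated directly and the $H^0$-correction accounts for the passage between $B$ and $H^0B$. The Euler-form normalizations $\langle \underline{\dim}N-u,\alpha_r\rangle$ and $\langle\alpha_r,\underline{\dim}N-u\rangle$ localizing at the vertices $i,i',i''$ play the same role as in the $\A_n$ proof.

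The main obstacle I expect is the combinatorial identity $\chi(Gr_g(B))=\sum_{e+f=g}\chi(Gr_e(N))\chi(Gr_f(M))$ and the precise identification of the unique ``missing'' index pair: in the general Dynkin setting $B$ need not be indecomposable and the Grassmannian Euler characteristics are no longer all $0$ or $1$, so the clean submodule bijection available for type $\A_n$ (where $\chi(Gr_e)\in\{0,1\}$ by \cite{CC}) must be replaced by a genuine multiplicativity statement for Euler characteristics of submodule Grassmannians across the almost split sequence. Establishing that the only unmatched term is the extreme pair — equivalently, that the almost split sequence being non-split forces exactly one ``defect'' in the submodule correspondence — is the crux; once that is in hand, the Euler-form bookkeeping via the Coxeter identity $\Phi(\underline{\dim}M)=\underline{\dim}N$ is routine and parallels Theorem \ref{teo:multf} verbatim.
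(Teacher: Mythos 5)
Your overall strategy for item $(a)$ --- expand $X_NX_M=X_{N\oplus M}$ as a double sum over pairs of submodules, match the liftable pairs against $\chi(Gr_g(B))$ using that $\underline{\dim}B=\underline{\dim}N+\underline{\dim}M$ makes the monomial weights coincide, and isolate a single defect term --- is sound, and it is essentially the route the paper takes for $(a)$ (the paper defers to Proposition 2.2 of Dupont, where exactly this Grassmannian decomposition, with affine fibers, is carried out; so the ``multiplicativity obstacle'' you flag is available in the literature rather than a genuine open point). However, you have misidentified the defect, and that identification is the crux. For the almost split sequence $0\to N\to B\xrightarrow{\pi}M\to 0$, a pair of submodules $(E,F)$ with $E\subseteq N$, $F\subseteq M$ lifts to some $G\subseteq B$ with $G\cap N=E$ and $\pi(G)=F$ if and only if the image of the class of the sequence under $\Ext^1(M,N)\to\Ext^1(F,N/E)$ vanishes. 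By the almost split property this image vanishes whenever $F\subsetneq M$ (pullback along a map that is not a retraction) and whenever $E\neq 0$ (pushout along a map that is not a section), so the unique non-liftable pair is $(E,F)=(0,M)$, i.e.\ $(e,f)=(0,\underline{\dim}M)$. Your claimed missing pair $(e,f)=(\underline{\dim}N,0)$ \emph{does} lift: take $G=N\subseteq B$.

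This is not a cosmetic slip, because only the pair $(0,\underline{\dim}M)$ produces the required pure coefficient monomial. Its weight is $y^{\underline{\dim}M}z^{\underline{\dim}N}\prod_r x_r^{-\langle \underline{\dim}M,\alpha_r\rangle-\langle\alpha_r,\underline{\dim}N\rangle}$, and the exponents vanish precisely by the Coxeter identity $\Phi(\underline{\dim}M)=\underline{\dim}N$, i.e.\ $\langle\underline{\dim}M,\alpha_r\rangle=-\langle\alpha_r,\underline{\dim}N\rangle$, yielding exactly the term $z^{\underline{\dim}N}y^{\underline{\dim}M}$ of item $(a)$. For your pair $(\underline{\dim}N,0)$ the exponent $-\langle\underline{\dim}N,\alpha_r\rangle-\langle\alpha_r,\underline{\dim}M\rangle$ does not vanish in general: in type $\A_2$ with $M=S_1$, $N=\tau M=P_2$, that term equals $y_2z_1x_1^{-1}x_2^{-1}$, so the asserted ``collapse of the surviving $x$-monomial'' fails there; moreover the expression you actually wrote for the defect, $y^{0}z^{\underline{n}+\underline{m}}\prod_r x_r^{-\langle\underline{n}+\underline{m},\alpha_r\rangle}$, is the $(e,f)=(0,0)$ term, inconsistent even with your own choice of pair. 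Finally, for items $(b)$ and $(c)$ your sketch stays at the level of ``mimic Theorem \ref{teo:multf}$(b)$'': since $M=P_i[1]$ (resp.\ $N=P_i[1]$) has no submodule Grassmannian, what is actually needed (and what the paper does) is a one-sum reindexing $f=e+(\underline{\dim}H^0(B)-\underline{\dim}N)$ inside $x_iX_N$, identifying submodules of $N$ containing the unique submodule of dimension $\underline{\dim}N-\underline{\dim}H^0(B)$ with submodules of $H^0(B)$; without this identification (or the corrected defect analysis above adapted to the shifted summand), cases $(b)$ and $(c)$ remain unproved.
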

\begin{proof}
We only prove $(b)$ since $(c)$ can be obtained in a similar way, and $(a)$ is analogous to Proposition 2.2 of \cite{Dupont}. 

Let $d = \underline{\dim} N$ and $h=\underline{\dim}H^0(B)$. According to Definition \ref{clustercharacter} and Euler form, 

\[
 \begin{array}{lcl} X_MX_N&=& x_i\sum_{0\leq e\leq d} y^ez^{d-e}\chi(Gr_e(N))\prod_{r=1}^nx_r^{-\langle e,\alpha_r\rangle -\langle \alpha_r,d-e\rangle}\\
                       &=& x_i\left(\frac{1}{x^d}\sum_{0\leq e\leq d} y^ez^{d-e}\chi(Gr_e(N))\prod_{r=1}^nx_r^{\sum_{j\rightarrow r} e_j +\sum_{r\rightarrow j}d_j-e_j}\right)\\
                       &=&\frac{1}{x^{h}} \left(\sum_{0< e\leq d} y^ez^{d-e}\chi(Gr_e(N))\prod_{r=1}^nx_r^{\sum_{j\rightarrow r} e_j +\sum_{r\rightarrow j}d_j-e_j}\right)+z^d\\
                                  &=&\frac{y^{d-h}}{x^{h}} \left(\sum_{0< e\leq d} y^{e+h -d}z^{d-e}\chi(Gr_e(N))\prod_{r=1}^nx_r^{\sum_{j\rightarrow r} e_j +\sum_{r\rightarrow j}d_j-e_j}\right)+z^d.\\\end{array}\]
  Taking $f=e+h-d$ and since in any Dynkin case it is possible to find a submodule of $N$ with dimension vector $d-h$, we can rewrite the above equation by
                                  
 \[\begin{array}{lcl}
                                  X_MX_N&=&\displaystyle y^{d-h} \left(\frac{1}{x^{h}}\sum_{0\leq f\leq h} y^{f}z^{h-f}\chi(Gr_f(H^0(B)))\prod_{r=1}^nx_r^{\sum_{j\rightarrow r} f_j+d_j-h_j +\sum_{r\rightarrow j}h_j-f_j}\right)+z^d\\
                                  &=&\displaystyle y^{d-h} x_{i-1}\left(\frac{1}{x^{h}}\sum_{0\leq f\leq h} y^{f}z^{h-f}\chi(Gr_f(H^0(B)))\prod_{r=1}^nx_r^{\sum_{j\rightarrow r} f_j +\sum_{r\rightarrow j}h_j-f_j}\right)+z^d\\
                                  &=& y^{d-h}x_{i-1}X_{H^0(B)} + z^d\\
                                  &=&X_By^{d-h}+z^d.\end{array}\]
\end{proof}

%
%
%
%

\medskip

As a consequence of Theorem \ref{auslander}, we  prove in the next result that any cluster character with coefficients is a cluster variable of a cluster algebra associated to a biframed quiver of Dynkin type. 

\begin{theorem}
  
Let $Q$ be a quiver of Dynkin type.  The characters $X_V$ when $V$ runs over the indecomposable rigid objects in $\mathcal{C}_Q$ are cluster variables of a cluster algebra with  coefficients associated to the biframed quiver $\widehat{Q}$.
\end{theorem}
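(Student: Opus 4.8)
The plan is to mimic the inductive argument used in Theorem \ref{main theorem}, but replacing the seed-mutation bookkeeping—which was available in type $\A_n$ thanks to the bijection $T\mapsto Q_T$ of Theorem \ref{teo:bij}—by the multiplication formula of Theorem \ref{auslander} applied along almost split triangles. First I would fix the initial seed $(\widehat{Q},\mathcal{X})$ associated to the biframed quiver and observe that, by Definition \ref{clustercharacter}, the initial cluster variables are exactly $X_{P_i[1]}=x_i$. The goal is to produce, for each indecomposable rigid object $V$ in $\CC_Q$, a finite mutation sequence from the initial seed that yields a cluster variable equal to $X_V$. Since $Q$ is Dynkin, every indecomposable object of $\CC_Q$ is rigid, and these objects are parametrized by the positive roots together with the shifts $P_i[1]$; the Auslander--Reiten quiver of $\CC_Q$ is finite and connected, so I would induct along AR-translation orbits starting from the objects $P_i[1]$.

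The key mechanism is that for any non-projective indecomposable module $M$ with $N=\tau M$, the objects $N$, $M$ sit in an almost split triangle $N\to B\to M\to N[1]$, and Theorem \ref{auslander} gives exactly one of the three relations
\[
X_NX_M=X_B+z^{\underline{\dim}N}y^{\underline{\dim}M},\quad
X_NX_M=X_By^{\underline{\dim}N-\underline{\dim}H^0B}+z^{\underline{\dim}N},\quad
X_NX_M=X_Bz^{\underline{\dim}M-\underline{\dim}H^0B}+y^{\underline{\dim}M},
\]
depending on whether the triangle is internal to the module category, or has $M=P_i[1]$, or has $N=P_i[1]$, $M=P_i$. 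Each such relation has precisely the shape of an exchange relation \eqref{exrel} in the cluster algebra of $\widehat{Q}$: the right-hand side is a sum of two monomials, one carrying the character $X_B$ of the middle term and the other a pure product of coefficients $y$'s and $z$'s (which are the frozen variables of the biframed quiver). So I would argue that, having already realized $X_N$ as a cluster variable in some seed whose quiver agrees suitably with $Q_T$ for the corresponding cluster tilting object, mutating at the relevant non-frozen vertex exchanges $X_N$ for $X_M$, with the coefficient monomials supplied exactly by the arrows to the frozen vertices prescribed in Definition \ref{Q_T}.

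The main obstacle—and the reason this theorem is weaker than Theorem A—is that in types other than $\A_n$ we only control the multiplication formula when \emph{one} of the two exchange triangles is almost split, so I cannot assert the full compatibility $\mu_r(Q_T)=Q_{\mu_r(T)}$ nor the bijection between cluster tilting objects and the mutation class. Consequently the induction must be organized so that at each step the object whose character we newly obtain, $X_M$ with $M$ the $\tau$-translate being traversed, is reached by a mutation whose exchange triangle \emph{is} the almost split one; I would need to verify that the frozen-vertex configuration of the ambient quiver at that stage matches the coefficient monomials $z^{\underline{\dim}N}y^{\underline{\dim}M}$ (resp.\ the mixed monomials of parts $(b)$,$(c)$) appearing in Theorem \ref{auslander}. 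I expect this matching to be the delicate point, handled by tracking how the frozen arrows evolve under the admissible mutations—exactly the local computation carried out for the $\A_n$ case in Example \ref{exmp:qtemr}—and by invoking connectedness of the AR-quiver of $\CC_Q$ (via \cite{BMRRT}) to guarantee that every indecomposable rigid object is eventually reached. Once every $X_V$ is shown to equal a cluster variable obtained from the initial seed by finitely many mutations, the theorem follows.
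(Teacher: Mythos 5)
Your proposal follows essentially the same route as the paper: take $X_{P_i[1]}=x_i$ as the initial cluster variables, then induct along the $\tau$-orbits of $\CC_Q$ starting from the shifted projectives, matching each exchange relation of the biframed cluster algebra against the almost-split-triangle formulas of Theorem \ref{auslander}, with the frozen-arrow bookkeeping identified as the point needing verification. The paper carries out precisely this plan (in detail for $\mathbb{D}_n$, the other types being analogous) by iterating the slice-mutation sequence $\mu_n\mu_{n-1}\cdots\mu_1$ and explicitly computing the quiver at each stage, including the arrows to frozen vertices --- the computation your sketch correctly flags as the delicate step but defers.
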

\begin{proof}

We only prove case $Q=\mathbb{D}_n$ since case ${\mathbb A}_n$ is Theorem \ref{main theorem}, and the others can be obtained in a similar way. We point out that the proof presented here is inspired by Caldero-Chapoton's proof for coefficient-free case. Let $(\widehat{Q},x)$ be the initial cluster seed. First notice that $X_{P_i[1]}$ is the initial cluster variable $x_i$, for $i=1,\cdots,n$. It follows from Theorem \ref{auslander}$(b)$ that

\[x_1X_{ P_1}=x_{2}z_1+y_1.\]
On the  other hand, by mutating the initial seed at $1$, one can obtain that
\[x'_1x_1=x_{2}z_1+y_1.\] In this case, $x'_1=X_{ P_1}$ and therefore $X_{ P_1}$ is a  cluster variable. Suppose by induction on $1\leq r\leq n-4$ that in the seed $\mu_r\mu_{r-1}\cdots\mu_1(\widehat{Q},x)$, $x'_i=X_{P_i}$  for each $1\leq i\leq r$. Since, at vertex $r+1$,  $\mu_r\mu_{r-1}\cdots\mu_1(\widehat{Q},x)$ has the configuration

\begin{center}
\begin{tikzcd}
  &   2n+r+1\ar[d] & \\
 r+2\ar[r] & r+1 \ar[dl] \ar[dr]& r\ar[l] \\
 n+1& \cdots &n+r+1 
\end{tikzcd}
\end{center}
 the cluster variable $x'_{r+1}$ is obtained by the exchange relation 
 
\[x'_{r+1}x_{r+1}= x_{r+2}x'_rz_{r+1}+\prod_{t=1}^{r+1}y_{t}.\] Now, by Theorem \ref{auslander}$(c)$ 
	\[X_{P_{r+1}[1]}X_{P_{r+1}}=X_{P_r\oplus P_{r+2}[1]}z^{\dim{P_{r+1}}-\dim{P_r}}+y^{\dim{P_{r+1}}}.\] 
Recalling that $X_{P_{r+1}[1]}=x_{r+1}$, $X_{P_{r+2}[1]}=x_{r+2}$ and using induction hypothesis to conclude that $X_{P_r}=x'_r$ we get

 \[x_{r+1}X_{P_{r+1}}=x'_rx_{r+2}z^{\dim{P_{r+1}}-\dim{P_r}}+y^{\dim{P_{r+1}}}= x'_rx_{r+2}z_{r+1}+\prod_{t=1}^{r+1}y_{t}= x_{r+1}x'_{r+1},\] from where we have $X_{P_{r+1}}=x'_{r+1}.$  By applying the sequence of mutation $\mu_n\mu_{n-1}\mu_{n-2}$  to $\mu_{n-3}\cdots\mu_1(\widehat{Q},x)$ and comparing the exchange relation obtained in each step with multiplication formula given by Theorem \ref{auslander}$(c)$, as done above,  it is possible to prove that $X_{P_{n-2}}$, $X_{P_{n-1}}$ and $X_{P_n}$ are also cluster variables.   

 Let $\mu$ be the sequence of mutation $\mu_n\mu_{n-1}\cdots\mu_1$. According to the above considerations, in the seed $\mu(\widehat{Q},x) $ we have that $x'_r=X_{P_r}$. Given $1\leq k<n-2$, denote by $u_r$ the cluster variable  associated to the vertex $r$ of $\mu^k(\widehat{Q})$  in the seed $\mu^k(\widehat{Q},x)$.  Suppose by induction on $k$ that  $u_r=X_{\tau^{-k+1}P_r}$, $1\leq r\leq n$.	It follows from Theorem \ref{auslander}$(a)$ that
 
 		\[X_{\tau^{-k+1}P_1}X_{\tau^{-k}P_1}=X_{\tau^{-k+1}P_2}+z^{\underline{\dim}\tau^{-k+1}P_1}y^{\underline{\dim}\tau^{-k}P_1}.\]
On the  other hand, observing that the quiver $\mu^k(\widehat{Q})$ at vertex $r$, for $1\leq r\leq n-k-2$, is 
\[\begin{tikzcd}
& 2n+r+k-1 & \\
r+1 \ar[r] & r\ar[u] \ar[d] \ar[r] & r-1 \\
& n+r+k &
\end{tikzcd}
\]
 by mutating the seed $\mu^k(\widehat{Q},x)$ at $1$, one can obtain that
	
	\[u_1u_1'=u_2+z_{k}y_{k+1}.\]
By induction hypothesis and the previous equation, it follows that

	\[\begin{array}{lcl}
		X_{\tau^{-k+1}P_1}u_1'&=& X_{\tau^{-k+1}P_2}+z_{k}y_{k+1}\\ 		
			&=&X_{\tau^{-k+1}P_2}+z^{\underline{\dim}\tau^{-k+1}P_1}y^{\underline{\dim}\tau^{-k}P_1}\\
	  		&=&X_{\tau^{-k+1}P_1}X_{\tau^{-k}P_1}.
	\end{array}\]
In this case, $u_1'= X_{\tau^{-k}P_1}$ and therefore $X_{\tau^{-k}P_1}$ is a cluster variable.  By applying the sequence of mutation $\mu_n\mu_{n-1}\cdots\mu_2$  to $\mu_1(\widehat{Q},x)$ and comparing the exchange relation obtained in each step with multiplication formula given by Theorem \ref{auslander}$(a)$, it is possible to conclude by induction that $X_{\tau^{-k}P_r}$, where $2\leq r\leq n$, are also cluster variables. 

Because of the shape of the quiver $\mu_r\mu_{r-1}\cdots\mu_1\mu^k(\widehat{Q})$ at vertex $r+1$, it is necessary to consider four cases: induction on $1\leq r\leq n-k-2$,  induction on $n-k-1\leq r\leq n-2$, $r=n-1$ and $r=n$.

Suppose by induction on $1\leq r< n-k-2$ that in the seed $\mu_r\mu_{r-1}\cdots\mu_1\mu^k(\widehat{Q},x)$, $u_i'=X_{\tau^{-k}P_i}$ for each $1\leq i\leq r$. Since, at vertex $r+1$,  $\mu_r\mu_{r-1}\cdots\mu_1\mu^k(\widehat{Q})$ has the configuration

	\[
\begin{tikzcd}
2n+k & \cdots & 2n+k+r \\
r+2 \ar[r] & r+1 \ar[ul] \ar[ur] \ar[dl] \ar[dr] & r \ar[l] \\
n+k+1 & \cdots & n+k+r+1
\end{tikzcd}
\]
the cluster variable $u_{r+1}'$ is obtained by the exchange relation 

	\[ u_{r+1}u_{r+1}'=u_{r+2}u_r'+\prod_{j=k}^{k+r}z_j\prod_{j=k+1}^{k+r+1}y_j.\]
Now, by Theorem \ref{auslander}$(a)$

	\[X_{\tau^{-k+1}P_{r+1}}X_{\tau^{-k}P_{r+1}}=X_{\tau^{-k+1}P_{r+2}}X_{\tau^{-k}P_r}+z^{\underline{\dim}\tau^{-k+1}P_{r+1}}y^{\underline{\dim}\tau^{-k}P_{r+1}}.\]
Recalling that $X_{\tau^{-k+1}P_{r+1}}=u_{r+1}$, $X_{\tau^{-k+1}P_{r+2}}=u_{r+2}$ and $X_{\tau^{-k}P_r}=u_r'$ we get

	\[u_{r+1}X_{\tau^{-k}P_{r+1}}=
				u_{r+2}u_r'+z^{\underline{\dim}\tau^{-k+1}P_{r+1}}y^{\underline{\dim}\tau^{-k}P_{r+1}}
				=u_{r+2}u_r'+\prod_{j=k}^{k+r}z_j\prod_{j=k+1}^{k+r+1}y_j
				= u_{r+1}u_{r+1}'\]
from where we have $X_{\tau^{-k}P_{r+1}}= u_{r+1}'$ for $1\leq r\leq n-k-2$. Since, at vertex $n-k-1$, $\mu_{n-k-2}\cdots\mu_1\mu^k(\widehat{Q})$ has the configuration

	\[\begin{tikzcd}[column sep=tiny]
&& 2n+k & \cdots & 3n-2 && \\
&n-k \ar[rr] && n-k-1 \ar[ul] \ar[ur]\ar[drrr]\ar[drr] \ar[dr, "2" description] \arrow[d, "2" description] \arrow[dl] \ar[dll] && n-k-2 \ar[ll] && \\
& n+1\cdots & n+k & n+k+1 \cdots & 2n-2 & 2n-1 & 2n
\end{tikzcd}
\]
the cluster variable $u_{n-k-1}'$ is obtained by the exchange relation

	\[u_{n-k-1}u_{n-k-1}'=u_{n-k}u_{n-k-2}'+\left(\prod_{j=k}^{n-2}z_j\right)\left(\prod_{j=1}^{k}y_j\right)\left(\prod_{j=k+1}^{n-2}y^2_j\right)y_{n-1}y_n.\]
Now, by Theorem \ref{auslander}$(a)$

	\[X_{\tau^{-k+1}P_{n-k-1}}X_{\tau^{-k}P_{n-k-1}}=X_{\tau^{-k+1}P_{n-k}}X_{\tau^{-k}P_{n-k-2}}+z^{\underline{\dim}\tau^{-k+1}P_{n-k-1}}y^{\underline{\dim}\tau^{-k}P_{n-k-1}}.\]
Recalling that $X_{\tau^{-k+1}P_{n-k-1}}=u_{n-k-1}$, $X_{\tau^{-k+1}P_{n-k}}=u_{n-k}$ and $X_{\tau^{-k}P_{n-k-2}}=u_{n-k-2}'$ we get

	\[u_{n-k-1}X_{\tau^{-k}P_{n-k-1}}=u_{n-k}u_{n-k-2}'+z^{\underline{\dim}\tau^{-k+1}P_{n-k-1}}y^{\underline{\dim}\tau^{-k}P_{n-k-1}}=u_{n-k-1}u_{n-k-1}'\]
from where we have $X_{\tau^{-k}P_{n-k-1}}=u_{n-k-1}'$. Notice that  at vertex $r+1$, $\mu_r\mu_{r-1}\cdots\mu_1\mu^k(\widehat{Q})$ has the following configuration, for $n-k-1\leq r< n-3$,

\[\begin{tikzcd}[column sep=tiny]
n+r+k+2 \cdots & 2n+k-1 & 2n+k \cdots & 3n-2 & 3n-1 & 3n \\
 r+2 \ar[rr] && r+1 \ar[drr] \ar[dr, "2" description] \arrow[d, "2" description] \arrow[dl] \ar[dll] \ar[urr] \ar[ur, "2" description] \arrow[u, "2" description] \arrow[ul] \ar[ull] \ar[urrr] \ar[drrr]  && r \ar[ll]& \\
r+k+3 \cdots & n+k & n+k+1 \cdots & 2n-2 & 2n-1 & 2n 
\end{tikzcd}
\] and $\mu_{n-3}\mu_{n-4}\cdots\mu_1\mu^k(\widehat{Q})$ at $n-2$ is of the form:

\[\begin{tikzcd}[column sep=tiny, row sep=tiny]
& 2n+k-1 & 2n+k \cdots & 3n-2 & 3n-1 & 3n \\
n \ar[drr] &&&&& \\
&& n-2 \ar[ddl] \ar[dd, "2" description] \ar[ddr, "2" description] \ar[ddrr] \ar[ddrrr] \ar[uurrr] \ar[uurr] \ar[uur, "2" description] \ar[uu, "2" description] \ar[uul] \ar[uurr] && n-3 \ar[ll] \\
n-1 \ar[urr] &&&&& \\
& n+k & n+k+1 \cdots & 2n-2 & 2n-1 & 2n
\end{tikzcd}\]
By using induction in $r$ again it is possible to prove that $\tau^{-k}P_r=u_r'$, $n-k-1\leq r\leq n-2$.

Finally, for the remaining two vertices it is necessary to split the argument into cases $k$ even and $k$ odd; we only prove case $k$ even. Since $\mu_{n-2}\cdots\mu_1\mu^k(\widehat{Q})$ at vertex $n-1$ is

\[\begin{tikzcd}
2n+k \cdots & 3n-2 & 3n \\
& n-1 \ar[ur] \ar[u] \ar[ul] \ar[dr] \ar[dl] & n-2 \ar[l] \\
n+k+1 & \cdots & 2n-1 
\end{tikzcd}
\]
the cluster variable $u'_{n-1}$ is obtained by the exchange relation

\[u_{n-1}u'_{n-1}=u'_{n-2}+\left(\prod_{j=k}^{n-2}z_j\right)z_n\left(\prod_{j=k+1}^{n-1}y_j\right).\]
On the other hand, due to Theorem \ref{auslander}$(a)$
\[X_{\tau^{-k+1}P_{n-1}}X_{\tau^{-k}P_{n-1}}=X_{\tau^{-k}P_{n-2}}+z^{\underline{\dim}\tau^{-k+1}P_{n-1}}y^{\underline{\dim}\tau^{-k}P_{n-1}}.\]
In the view of $X_{\tau^{-k+1}P_{n-1}}=u_{n-1}$ and  $X_{\tau^{-k}P_{n-2}}=u'_{n-2}$, the two above equations yields $X_{\tau^{-k}P_{n-1}}=u'_{n-1}.$ Similarly, it is possible to prove that $X_{\tau^{-k}P_{n}}=u'_{n}$, which completes the proof. 
\end{proof}

We conjecture that results concerning the bijection between mutation classes of the biframed quiver $\widehat{A}_n$ and cluster tilting objects (as stated in Theorem \ref{teo:bij}) and a multiplication formula for cluster characters with coefficients can also be obtained in any Dynkin case; examples can be exploit using Keller's app for mutation \cite{appK}.

\newcommand{\etalchar}[1]{$^{#1}$}
\providecommand{\bysame}{\leavevmode\hbox to3em{\hrulefill}\thinspace}
\providecommand{\MR}{\relax\ifhmode\unskip\space\fi MR }
\providecommand{\MRhref}[2]{%
  \href{http://www.ams.org/mathscinet-getitem?mr=#1}{#2}
}
\providecommand{\href}[2]{#2}

\end{document}